\newtheorem{theorem}{Theorem}
\newtheorem{lemma}[theorem]{Lemma}
\newtheorem{remark}[theorem]{Remark}
\newtheorem{corollary}[theorem]{Corollary}
\newtheorem{definition}[theorem]{Definition}
\numberwithin{equation}{section}
\numberwithin{theorem}{section}
\providecommand{\keywords}[1]
{
  \small	
  \textbf{Keywords } #1
}
\providecommand{\thank}[1]
{
  \small	
  \textbf{Acknowledgment } #1
}
\title{\textbf{Global well-posedness of quadratic and subquadratic half wave Schrödinger equations}}
\author{Xi Chen}
\date{}
\begin{document}

\maketitle
\begin{abstract}
We consider the following $p$ order nonlinear half wave Schrödinger equations
$$\left(i \partial_{t}+\partial_{x }^2-\left|D_{y}\right|\right) u=\pm|u|^{p-1} u$$
on the plane $\mathbb{R}^2$ with $1<p\leq 2$. This equation is considered as a toy model motivated by the study of solutions to weakly dispersive equations. In particular, the global well-posedness of this equation is a difficult problem due to the anisotropic property of the equation, with one direction corresponding to the half-wave operator, which is not dispersive.  In this paper, we prove the global well-posedness of this equation in $L_x^2 H_y^s(\mathbb{R}^2) \cap H_x^1 L_y^2(\mathbb{R}^2)$($\frac{1}{2}\leq s \leq 1$), which is the first global well-posedness result of nonlinear half wave Schrödinger equations. With the global well-posedness in the energy space for the focusing equation and the study on the solitary wave in \cite{ref1}, we complete the proof of the stability of the set of ground states. Moreover, we consider the half wave Schrödinger equations on $\mathbb{R}_{x}\times\mathbb{T}_{y}$,  which can also be called the wave guide Schrödinger equations on $\mathbb{R}_{x}\times\mathbb{T}_{y}$. Using a similar approach in the analysis of the Cauchy problem of half wave Schrödinger equations on $\mathbb{R}^2$, we can also deduce the global well-posedness of $p$ ($1<p\leq2$) order wave guide Schrödinger equations in $L_x^2 H_y^s(\mathbb{R}\times\mathbb{T}) \cap H_x^1 L_y^2(\mathbb{R}\times\mathbb{T})$  with $\frac{1}{2}\leq s \leq 1$. With the global well-posedness in the energy space for the focusing wave guide Schrödinger equations and the study on the ground states in \cite{ref18}, we complete the proof of the orbital stability of the ground states with small frequencies.
\end{abstract}
\keywords{Half wave Schrödinger equations, Global well-posedness, Ground states.}\\\\
\thank{The author is currently a PhD student at Institut de Mathématiques d'Orsay of Université Paris-Saclay, and the author would like to thank his PhD advisor Patrick Gérard for his supervision of this paper. }
\tableofcontents{}
\section{Introduction}
We consider the following $p$ order nonlinear half wave Schrödinger equation on the plane,
\begin{equation}
\label{1.1}
\begin{aligned}
i \partial_{t}u + (\partial_x^2-|D_y|) u & =\pm |u|^{p-1} u, \quad(x, y) \in \mathbb{R} \times \mathbb{R},\\
u(0,x,y) & = u_0(x,y).
\end{aligned}
\end{equation}
In this paper, if not otherwise specified, we default the order of the nonlinear term $p$ to satisfy $1<p\leq 2$ in (\ref{1.1}). Also, we denote the focusing equation by $(\ref{1.1})_{-}$ and denote the defocusing equation by $(\ref{1.1})_{+}$.\\\\
We observe that this equation enjoys the following mass and energy conservations
\begin{equation}
\label{1.2}
M(u): = \frac{1}{2}\int_{\mathbb{R}^2}|u(t, x, y)|^{2} d x d y= \frac{1}{2}\int_{\mathbb{R}^2}|u_0(x,y)|^{2} d x d y,
\end{equation}
\begin{equation}
\label{1.3}
H_{\pm}(u(t)) = H_{\pm}(u_0),
\end{equation}
where 
\begin{align*}
H_{\pm}(u(t)) :=\frac{1}{2} \int_{\mathbb{R}^2}\left(\left|\partial_{x} u(t,x, y)\right|^{2}+\left||D_{y}|^{\frac{1}{2}}u(t,x, y)\right|^2\right) d x d y \pm \frac{1}{p+1} \int_{\mathbb{R}^2}|u(t,x, y)|^{p+1} d x d y.
\end{align*}
We also introduce the following anisotropic Sobolev space 
\begin{align*}
\mathcal{H}^s : = L_x^2 H_y^s(\mathbb{R}^2) \cap H_x^1 L_y^2(\mathbb{R}^2),
\end{align*}
where $\mathcal{H}^{\frac{1}{2}}$ is called the energy space. 
\begin{remark}
In fact, the solutions to (\ref{1.1}) are invariant by the scaling 
\begin{align*}
u \mapsto u_{\lambda}(t, x, y)=\lambda^{\frac{2}{p-1}} u\left(\lambda^{2} t, \lambda x, \lambda^{2} y\right),
\end{align*}
and the relevant regularity spaces for this equation are the anisotropic Sobolev spaces $L_{x}^{2} H_{y}^{s}(\mathbb{R}^2) \cap H_{x}^{2 s} L_{y}^{2}(\mathbb{R}^2)$. In this paper, with $\frac{1}{2}<s\leq 1$, we consider the spaces $\mathcal{H}^s$ rather than the spaces $L_{x}^{2} H_{y}^{s}(\mathbb{R}^2) \cap H_{x}^{2 s} L_{y}^{2}(\mathbb{R}^2)$ since we do not want to add regularity in the $x$-direction to increase the complexity in the analysis of the Cauchy problem.
\end{remark}
\begin{remark}
\label{remark 1.21}
For $u_0 \in \mathcal{H}^{\frac{1}{2}}$, if there exists a solution $u(t) \in \mathcal{H}^{\frac{1}{2}}$ to $(\ref{1.1})_{-}$, then the $\mathcal{H}^{\frac{1}{2}}$ norm of $u(t)$ is also uniformly bounded in $t$. We observe that $\mathcal{H}^{\frac{1}{2}} \subset L_{x,y}^6(\mathbb{R}^2)$, then by Hölder's inequality, we have
\begin{align*}
\|u(t)\|_{L_{x,y}^{p+1}(\mathbb{R}^2)} \leq  \|u(t)\|_{L_{x,y}^2(\mathbb{R}^2)}^{\frac{3}{p+1}-\frac{1}{2}} \|u(t)\|_{L_{x,y}^6(\mathbb{R}^2)}^{\frac{3}{2}-\frac{3}{p+1}}
\end{align*}
By the mass and the energy conservation, we have
\begin{align*}
\frac{1}{2}\|u(t)\|_{\mathcal{H}^{\frac{1}{2}}}^2 - C\|u_0\|_{L_{x,y}^2(\mathbb{R}^2)}^{\frac{5}{2}-\frac{1}{2}p} \|u(t)\|_{\mathcal{H}^{\frac{1}{2}}}^{\frac{3}{2}p-\frac{3}{2}} \leq H(u_0) \text{ with some } C > 0.
\end{align*}
Since $\frac{3}{2}p-\frac{3}{2} <2$, we infer the uniform boundedness of $\|u(t)\|_{\mathcal{H}^{\frac{1}{2}}}$.
\end{remark}
We are also interested in the half wave Schrödinger equations on the cylinder $\mathbb{R}_{x}\times\mathbb{T}_{y}$, which can also be called the wave guide Schrödinger equations on the cylinder $\mathbb{R}_{x}\times\mathbb{T}_{y}$,
\begin{equation}
\label{1.01}
\begin{aligned}
i \partial_{t}u + (\partial_x^2-|D_y|) u & =\pm |u|^{p-1} u, \quad(x, y) \in \mathbb{R} \times \mathbb{T},\\
u(0,x,y) & = u_0(x,y).
\end{aligned}
\end{equation}
Also, if not otherwise specified, we default the order of the nonlinear term $p$ to satisfy $1<p\leq 2$ in (\ref{1.01}). Similarly as before, we denote the focusing equation by $(\ref{1.01})_{-}$ and denote the defocusing equation by $(\ref{1.01})_{+}$. We observe that (\ref{1.01}) also enjoys the mass and the energy conservations in the forms of (\ref{1.2}) and (\ref{1.3}) with integrals on $\mathbb{R}_x \times \mathbb{T}_y$.\\\\ 
In order to study the Cauchy problem of (\ref{1.01}), we introduce the following anisotropic Sobolev space
\begin{align*}
\mathcal{K}^s : = L_x^2 H_y^s(\mathbb{R}\times\mathbb{T}) \cap H_x^1 L_y^2(\mathbb{R}\times\mathbb{T}),
\end{align*}
where $\mathcal{K}^{\frac{1}{2}}$ is called the energy space.
\begin{remark}
As we explained in Remark \ref{remark 1.21}, we can also infer the $\mathcal{K}^{\frac{1}{2}}$ uniform boundedness in $t$ of the $\mathcal{K}^{\frac{1}{2}}$ solutions to $(\ref{1.01})_{-}$. 
\end{remark}
The half wave Schrödinger equation was first considered by H. Xu in \cite{ref5}. She studied the large time behavior of solutions to the cubic ($ p=3 $) wave guide Schrödinger equation on the cylinder $\mathbb{R}_x\times\mathbb{T}_y$ with small smooth initial data and obtained modified scattering result. The cubic half wave Schrödinger equation on the plane $\mathbb{R}^2$ was studied by the author in \cite{ref3}, and he has obtained the existence of wave operators with small smooth initial data and some cascade results. In \cite{ref1}, Y. Bahri, S.Ibrahim and H. Kikuchi have studied the stability of the set of ground states of the focusing half wave Schrödinger equations $(\ref{1.1})_{-}$ for $1<p<5$ with the global well-posedness assumptions in the energy space $\mathcal{H}^{\frac{1}{2}}$, and they have also shown the local well-posedness of $(\ref{1.1})$ in $L_x^2 H_y^s(\mathbb{R}^2)$ for $1<p\leq 5$ with some $s>\frac{1}{2}$. In \cite{ref18}, under the assumption of the global well-posedness of the focusing wave guide Schrödinger equations $(\ref{1.01})_{-}$ with $1<p<5$ in the energy space $\mathcal{K}^{\frac{1}{2}}$, Y. Bahri, S.Ibrahim and H. Kikuchi have shown the orbital stability of the ground states with small frequencies. For the cubic half wave Schrödinger equation on the plane in lower regularity spaces, as N. Burq, P. Gérard and N. Tzvetkov did in \cite{ref10}, one can prove that the time flow map on $H_{x}^{2 s} L_{y}^{2}(\mathbb{R}^2) \cap L_{x}^{2} H_{y}^{s}(\mathbb{R}^2)$ with $\frac{1}{4} \leq s<\frac{1}{2}$ is not $C^3$ at the origin. Also, an adaptation of the arguments from \cite{ref10} implemented in I. Kato \cite{ref11} implies the ill-posedness of the cubic half wave Schrödinger equation on the plane in $H_{x}^{2 s} L_{y}^{2}(\mathbb{R}^2) \cap L_{x}^{2} H_{y}^{s}(\mathbb{R}^2)$ with $s<\frac{1}{4}$. Moreover, the probabilistic local well-posedness in quasilinear regimes for the cubic half wave Schrödinger equation on the plane has been obtained by N. Camps, L. Gassot and S. Ibrahim in \cite{ref9}. One can see an overview of the results on the cubic half wave Schrödinger equation in \cite{ref17}.
\begin{remark}
\label{remark 1.2}
The local well-posedness result in \cite{ref1} is written as follows: Assume that $1<p\leq 5$ and $s>\frac{1}{2}$, then for any $u_0 \in L_{x}^{2} H_{y}^{s}\left(\mathbb{R}^{2}\right)$ , there exist $T_{\max} > 0$ and a unique local solution $u \in C\left(\left(-T_{\max }, T_{\max }\right) ; L_{x}^{2} H_{y}^{s}\left(\mathbb{R}^{2}\right)\right)$ to (\ref{1.1}) with $\left.u\right|_{t=0}=u_0$. However, we are confused about this statement. First of all, one cannot expect high regularity on the nonlinear term $|u|^{p-1}u$ when $p\neq 3,5$, so it is not clear whether the local well-posedness still holds for large $s$ when $p\neq 3,5$. Secondly, for $2<p\leq 5$, according to the proof of the local well-posedness result in \cite{ref1}, one can only ensure the uniqueness of the solution in $C\left(\left(-T, T\right) ; L_{x}^{2} H_{y}^{s}\left(\mathbb{R}^{2}\right)\right)\cap L^{4}\left((-T, T) ; L_{x,y}^{\infty}\left(\mathbb{R}^{2}\right)\right)$, but not in $ C\left(\left(-T, T\right) ; L_{x}^{2} H_{y}^{s}\left(\mathbb{R}^{2}\right)\right)$. See the proof of Theorem 1.6 in \cite{ref1} and Remark \ref{remark a2} in this paper for more details and explanations.
\end{remark}
\subsection{Main results}
\subsubsection{Global well-posedness}
In this paper, we study the Cauchy problem of (\ref{1.1}) and deduce following the global well-posedness result of (\ref{1.1}) in $\mathcal{H}^s$ with $\frac{1}{2}\leq s \leq 1$, and this is the first global well-posedness result of nonlinear half wave Schrödinger equations. Also, this result includes the case of the global well-posedness in the energy space $\mathcal{H}^{\frac{1}{2}}$, which solves an open problem in the study of nonlinear half wave Schrödinger equations.
\begin{theorem}
\label{theorem 1.4}
Let $\frac{1}{2}\leq s \leq 1$. Given $u_0 \in \mathcal{H}^s : = L_x^2 H_y^s(\mathbb{R}^2) \cap H_x^1 L_y^2(\mathbb{R}^2)$, then there exists a unique global solution $u \in C\left(\mathbb{R} ; \mathcal{H}^s\right)$ to $(\ref{1.1})$ with $u(0) = u_0$. Moreover, for every $T>0$, the flow map $u_0 \in \mathcal{H}^s \mapsto u \in C\left([-T, T], \mathcal{H}^s\right)$ is continuous.
\end{theorem}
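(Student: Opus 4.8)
The plan is to prove local well-posedness by a contraction mapping in anisotropic Strichartz spaces, and then to upgrade it to global well-posedness using the conservation laws (\ref{1.2})--(\ref{1.3}) together with a persistence-of-regularity argument. The only source of dispersion is the Schrödinger part $\partial_x^2$: writing the linear propagator as $S(t) := e^{it(\partial_x^2 - |D_y|)}$, it factors into the one-dimensional Schrödinger flow $e^{it\partial_x^2}$ acting in $x$ and the unitary phase $e^{-it|D_y|}$ acting in $y$. Since this phase preserves the $L_y^2$ and $H_y^s$ norms, Plancherel in $y$ followed by Minkowski's inequality lifts the standard one-dimensional Schrödinger Strichartz estimates to the anisotropic bounds
\begin{align*}
\|S(t)u_0\|_{L_t^q L_x^r H_y^s(\mathbb{R}^2)} \lesssim \|u_0\|_{L_x^2 H_y^s(\mathbb{R}^2)}, \qquad \tfrac{2}{q}+\tfrac{1}{r}=\tfrac12,\ 4\le q\le\infty,
\end{align*}
together with the corresponding inhomogeneous Duhamel estimate and the analogues with $H_x^1 L_y^2$ in place of $L_x^2 H_y^s$. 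The non-dispersive $y$-direction never needs to be dispersed; it is simply carried along by the unitarity of $e^{-it|D_y|}$.

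With these estimates I would run a fixed point for the Duhamel formulation
\begin{align*}
u(t) = S(t)u_0 \mp i\int_0^t S(t-\tau)\bigl(|u|^{p-1}u\bigr)(\tau)\,d\tau
\end{align*}
in a space of the form $X_T := C([-T,T];\mathcal{H}^s)\cap L_t^q W_x^{1,r}H_y^s$ for a suitable admissible pair $(q,r)$. The nonlinearity is placed in the dual Strichartz space as follows: in $x$ there is only a single derivative, so $\partial_x(|u|^{p-1}u)$ is bounded by Hölder against $|u|^{p-1}\partial_x u$; in $y$ the fractional chain rule applies for every $0<s\le 1$ since $F(u)=|u|^{p-1}u$ has continuous derivative $F'(u)=p|u|^{p-1}$ when $p>1$, reducing matters to $\bigl\||u|^{p-1}|D_y|^s u\bigr\|$. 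The factors $|u|^{p-1}$ are absorbed by the $y$-Sobolev embeddings ($H_y^s\hookrightarrow L_y^\infty$ for $s>\tfrac12$, and $H_y^{1/2}\hookrightarrow L_y^\gamma$ for all finite $\gamma$ at the endpoint), while a positive power $T^\theta$ is extracted by Hölder in $t$; this last gain is exactly where the subcriticality of the range $1<p\le 2$ (the scaling index $s_c=\tfrac{3-4/(p-1)}{4}$ is negative) makes the estimate close. The contraction, the uniqueness and the continuous dependence all rest on the pointwise bound $\bigl||u|^{p-1}u-|v|^{p-1}v\bigr|\lesssim\bigl(|u|^{p-1}+|v|^{p-1}\bigr)|u-v|$, measured in the weakest ($L^2$-based) norm so that no regularity is required of the non-smooth factor $|u|^{p-1}$. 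This produces a unique local solution in $\mathcal{H}^s$, and when the construction is run at the energy level $s=\tfrac12$ its lifespan depends only on $\|u_0\|_{\mathcal{H}^{1/2}}$.

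To globalize I would first settle the energy space $s=\tfrac12$. By the mass conservation (\ref{1.2}), the energy conservation (\ref{1.3}), the embedding $\mathcal{H}^{1/2}\hookrightarrow L_{x,y}^6$ and Gagliardo--Nirenberg interpolation, the $\mathcal{H}^{1/2}$ norm of the solution stays bounded uniformly in time: for $(\ref{1.1})_+$ the energy controls the norm directly, and for $(\ref{1.1})_-$ this is precisely the computation recorded in Remark \ref{remark 1.21}. Since the local lifespan depends only on this norm, iterating the local construction over consecutive intervals of uniform length yields a global solution in $\mathcal{H}^{1/2}$. For $\tfrac12<s\le 1$ I would invoke persistence of regularity: feeding the now globally bounded $\mathcal{H}^{1/2}$ norm into the nonlinear estimate makes the higher $H_y^s$ and $H_x^1$ components of the $\mathcal{H}^s$ norm obey a linear (Gronwall-type) differential inequality, so they remain finite on every bounded interval and the solution is global in $\mathcal{H}^s$. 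Continuity of the flow map on $[-T,T]$ follows from the same difference estimates used for uniqueness.

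The main obstacle is to close the local theory exactly at the energy-space endpoint $s=\tfrac12$, where two difficulties reinforce one another. First, $H_y^{1/2}(\mathbb{R})$ only just fails to embed into $L_y^\infty$, so the $y$-embeddings that absorb the factors $|u|^{p-1}$ are borderline and must be run through $L_y^\gamma$ with $\gamma$ large and finite rather than $L_y^\infty$. Second, the low regularity $1<p\le 2$ forces $|u|^{p-1}u$ to be handled as merely $C^1$ with Hölder-continuous derivative, so differences can be controlled only in a low-regularity norm; this is the very subtlety flagged in Remark \ref{remark 1.2} for the uniqueness class, and for $1<p\le 2$ the mildness of the nonlinearity is what should allow uniqueness to be recovered in the full space $C(\mathbb{R};\mathcal{H}^s)$ claimed in the statement. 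Arranging the contraction estimate so that its lifespan depends only on the conserved $\mathcal{H}^{1/2}$ norm, which is what feeds the globalization step, is the heart of the matter.
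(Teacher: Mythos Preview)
Your overall strategy---first close a contraction at $s=\tfrac12$ with lifespan depending only on $\|u_0\|_{\mathcal{H}^{1/2}}$, then iterate by conservation, then propagate the higher regularity by a linear Gronwall---has a gap at each of its two steps, and the paper in fact runs the argument in the opposite order.

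At the endpoint $s=\tfrac12$ you correctly identify the obstruction: $H_y^{1/2}(\mathbb{R})\not\hookrightarrow L_y^\infty$. But replacing $L_y^\infty$ by $L_y^\gamma$ for large finite $\gamma$ does not close the fixed point. In the nonlinear estimate $\bigl\||u|^{p-1}u\bigr\|_{H_y^{1/2}}\lesssim\|u\|_{L_y^\infty}^{p-1}\|u\|_{H_y^{1/2}}$ (obtained from the Gagliardo representation), the $L_y^\infty$ factor cannot be traded for $L_y^\gamma$ without simultaneously pushing the other factor into $W_y^{1/2,r}$ with $r>2$, which is strictly stronger than $H_y^{1/2}$. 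Likewise, in the difference estimate the H\"older split $\|\,|u|^{p-1}|u-v|\,\|_{L_y^2}\le\|u\|_{L_y^\alpha}^{p-1}\|u-v\|_{L_y^\beta}$ forces either $\alpha=\infty$ (which fails) or $\beta>2$, and in the latter case the contraction must be run in an $H_y^{1/2}$-type norm for the difference, which reopens the problem of estimating $\bigl\||u|^{p-1}u-|v|^{p-1}v\bigr\|_{H_y^{1/2}}$ for a merely $C^{1,p-1}$ nonlinearity. The paper does not attempt a direct contraction at $s=\tfrac12$; instead it first proves global well-posedness for $\tfrac12<s\le1$, then constructs the energy-space solution by approximation and compactness, and proves uniqueness by a Yudovich-type argument that exploits the Trudinger inequality $\|u\|_{L_y^k}\le C\sqrt{k}\,\|u\|_{H_y^{1/2}}$ and optimises over $k\to\infty$.

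For $\tfrac12<s\le1$ your ``linear Gronwall'' is not linear. The persistence estimate coming out of Strichartz plus Lemma~A.1 reads, schematically,
\[
\|u(\tau)\|_{L_x^2H_y^s}\;\le\;C+C\int_0^\tau \|u(t)\|_{L_y^\infty}^{p-1}\,\|u(t)\|_{L_x^2H_y^s}\,dt,
\]
and the coefficient $\|u(t)\|_{L_y^\infty}^{p-1}$ is \emph{not} controlled by the conserved $\mathcal{H}^{1/2}$ norm; it is only controlled by $\|u(t)\|_{L_x^2H_y^s}$ itself via the embedding $H_y^s\hookrightarrow L_y^\infty$. Feeding that back in gives a power-type blow-up, not a global bound. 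The paper closes this loop with the Brezis--Gallou\"et inequality
\[
\|u_\eta\|_{L_\eta^1L_x^2}\;\lesssim\;\|u\|_{L_x^2H_y^{1/2}}\Bigl[\log\Bigl(1+\tfrac{\|u\|_{L_x^2H_y^s}}{\|u\|_{L_x^2H_y^{1/2}}}\Bigr)\Bigr]^{1/2},
\]
which converts the coefficient into a logarithm of the unknown and yields a \emph{nonlinear} Gronwall of the form $N'\lesssim N[\log(2+N)]^{q'(p-1)/2}$; this integrates because $q'(p-1)/2<1$ exactly when $1<p\le2$. This logarithmic mechanism, absent from your outline, is the heart of the globalisation for $s>\tfrac12$, and it is also what provides the approximating global solutions used to build the $s=\tfrac12$ theory.
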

Also, as a corollary of Theorem \ref{theorem 1.4}, we infer the following global well-posedness result of the quadratic ($p=2$) half wave Schrödinger equations (\ref{1.1}) in the higher order Sobolev space $H^2(\mathbb{R}^2)$.
\begin{corollary}
\label{corollary 1.5}
Let $p=2$. Given $u_0 \in H^2(\mathbb{R}^2)$, there exists a unique global solution $u \in C(\mathbb{R};H^2(\mathbb{R}^2))$ to (\ref{1.1}) with $u(0) = u_0$. Also, for every $T>0$, the flow map $u_0 \in H^2(\mathbb{R}^2) \mapsto u \in C\left([-T, T], H^2(\mathbb{R}^2)\right)$ is continuous. 
\end{corollary}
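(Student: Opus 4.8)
The plan is to deduce Corollary \ref{corollary 1.5} as a persistence-of-regularity statement sitting on top of Theorem \ref{theorem 1.4}. First I would observe that $H^2(\mathbb{R}^2)\hookrightarrow\mathcal{H}^1$, since $\|u\|_{\mathcal{H}^1}^2\simeq\int_{\mathbb{R}^2}(1+\xi^2+\eta^2)|\hat u(\xi,\eta)|^2\,d\xi\,d\eta=\|u\|_{H^1(\mathbb{R}^2)}^2$ and $H^2\subset H^1$. Hence, for $u_0\in H^2(\mathbb{R}^2)$, Theorem \ref{theorem 1.4} with $s=1$ already furnishes a unique global solution $u\in C(\mathbb{R};\mathcal{H}^1)$ together with a finite a priori bound $A(T):=\sup_{|t|\le T}\|u(t)\|_{\mathcal{H}^1}<\infty$ on each interval $[-T,T]$. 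Uniqueness in $C(\mathbb{R};H^2)$ is then immediate, because any $H^2$-solution is a fortiori an $\mathcal{H}^1$-solution. It therefore remains to prove (i) persistence of $H^2$ regularity, namely $\sup_{|t|\le T}\|u(t)\|_{H^2}<\infty$, and (ii) continuity of the $H^2$ flow map.

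The reason the statement is restricted to $p=2$ is the regularity of the nonlinearity. Writing $F(u)=|u|u$ and using Wirtinger derivatives one finds $F_u=\tfrac32|u|$ and $F_{\bar u}=\tfrac{u^2}{2|u|}$, so $|F'(u)|\lesssim|u|$, while all second Wirtinger derivatives are homogeneous of degree $0$ and bounded, e.g. $|F_{uu}|,|F_{u\bar u}|,|F_{\bar u\bar u}|\le\tfrac34$. Thus $F\in C^{1,1}$: its gradient is Lipschitz, which is precisely the borderline regularity needed to differentiate $F(u)$ twice inside an $L^2$-energy estimate. For $1<p<2$ the second derivatives of $|u|^{p-1}u$ behave like $|u|^{p-2}$, unbounded near the origin, so this scheme breaks down; this is what confines the corollary to $p=2$.

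For the a priori $H^2$ bound I would run a direct energy estimate. Since $i(\partial_x^2-|D_y|)$ is skew-adjoint and commutes with $\partial^\alpha$, for each multi-index $|\alpha|=2$ differentiating the equation and pairing with $\partial^\alpha u$ gives $\tfrac{d}{dt}\|\partial^\alpha u\|_{L^2}^2=\pm2\,\mathrm{Im}\int_{\mathbb{R}^2}\partial^\alpha F(u)\,\overline{\partial^\alpha u}\,dx\,dy$. By the chain rule $\partial^\alpha F(u)=\bigl(F_u\partial^\alpha u+F_{\bar u}\overline{\partial^\alpha u}\bigr)+R_\alpha$, where $R_\alpha$ collects terms quadratic in first derivatives times bounded second derivatives of $F$. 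In the top-order piece the contribution $\tfrac32\int|u|\,|\partial^\alpha u|^2$ is real and drops out under $\mathrm{Im}$, while the remainder is bounded by $\tfrac12\int|u|\,|\partial^\alpha u|^2\le\tfrac12\|u\|_{L^\infty}\|\partial^\alpha u\|_{L^2}^2$; the term $R_\alpha$ obeys $\bigl|\int R_\alpha\,\overline{\partial^\alpha u}\bigr|\lesssim\|\nabla u\|_{L^4}^2\|\partial^\alpha u\|_{L^2}\lesssim\|u\|_{H^1}\|u\|_{H^2}^2$ by the two-dimensional Gagliardo--Nirenberg inequality $\|\nabla u\|_{L^4}^2\lesssim\|\nabla u\|_{L^2}\|\nabla u\|_{H^1}$. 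Summing over $|\alpha|\le2$ yields $\tfrac{d}{dt}\|u\|_{H^2}^2\lesssim\bigl(\|u\|_{L^\infty}+\|u\|_{\mathcal{H}^1}\bigr)\|u\|_{H^2}^2$.

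The main obstacle, and the only genuinely non-routine point, is the factor $\|u\|_{L^\infty_{x,y}}$: it is not controlled by $\mathcal{H}^1$ in two dimensions, and a pure energy bound via Agmon's inequality $\|u\|_{L^\infty}\lesssim\|u\|_{L^2}^{1/2}\|u\|_{H^2}^{1/2}$ only produces the superlinear differential inequality $\tfrac{d}{dt}\|u\|_{H^2}^2\lesssim\|u\|_{H^2}^{5/2}$, which may blow up in finite time. Here the anisotropy of the equation is felt, since the non-dispersive $y$-direction yields Strichartz smoothing only in $x$. To close the estimate I would instead show that $t\mapsto\|u(t)\|_{L^\infty_{x,y}}$ is integrable on $[-T,T]$: using the one-dimensional Schrödinger Strichartz pair $(q,r)=(4,\infty)$ in $x$, applied with values in $H^s_y$ for some $s\in(\tfrac12,1]$ together with $H^s_y(\mathbb{R})\hookrightarrow L^\infty_y$, one controls $\|u\|_{L^4([-T,T];L^\infty_xH^s_y)}$ through the Duhamel formula, the bound $A(T)$, and the anisotropic space-time estimates underlying the proof of Theorem \ref{theorem 1.4}. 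This gives $\int_{-T}^T\|u(t)\|_{L^\infty_{x,y}}\,dt\lesssim T^{3/4}\|u\|_{L^4_TL^\infty_xH^s_y}<\infty$, whence Gronwall's inequality turns the above into $\sup_{|t|\le T}\|u(t)\|_{H^2}\le\|u_0\|_{H^2}\exp\bigl(C\!\int_{-T}^T(\|u\|_{L^\infty}+\|u\|_{\mathcal{H}^1})\,dt\bigr)<\infty$, proving global persistence. Finally, for the continuity of the $H^2$ flow I would combine these uniform bounds with the $\mathcal{H}^1$ continuous dependence of Theorem \ref{theorem 1.4} in a Bona--Smith argument: approximate $u_0$ in $H^2$ by smoother data, extract weak limits from the uniform $H^2$ bounds, and upgrade weak to strong $H^2$ convergence by interpolation against the $\mathcal{H}^1$-continuity.
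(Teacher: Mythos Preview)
Your approach is essentially correct but genuinely different from the paper's. The paper first runs a self-contained contraction argument in $C([-T,T];H^2)$ (via the product estimate $\||u|u\|_{H^2}\lesssim\|u\|_{L^\infty}\|u\|_{H^2}$ of Lemma~\ref{lemma a2}) to get local $H^2$ well-posedness, and then handles the dangerous $\|u\|_{L^\infty}$ factor through a \emph{two-dimensional Brezis--Gallou\"et inequality} $\|\hat u\|_{L^1_{\xi,\eta}}\le C\|u\|_{H^1}\bigl[\log(1+\|u\|_{H^2}/\|u\|_{H^1})\bigr]^{1/2}$; together with the global $H^1(\mathbb{R}^2)=\mathcal{H}^1$ bound from Theorem~\ref{theorem 1.4} this feeds a logarithmic Gronwall inequality and yields $\|u(\tau)\|_{H^2}\le C\exp(C_\tau)$. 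You bypass Brezis--Gallou\"et entirely: the Strichartz pair $(q,r)=(4,\infty)$ in Lemma~\ref{lemma 2.1} combined with $H^s_y\hookrightarrow L^\infty_y$ for $s>\tfrac12$ indeed gives $u\in L^4_{\mathrm{loc}}(\mathbb{R};L^\infty_x H^1_y)\hookrightarrow L^1_{\mathrm{loc}}(\mathbb{R};L^\infty_{x,y})$ (the inhomogeneous term being controlled by $\||u|u\|_{L^{4/3}_T L^1_x H^1_y}\lesssim T^{3/4}A(T)^2$ from the global $\mathcal{H}^1$ bound), which turns your $H^2$ energy inequality into a \emph{standard} Gronwall. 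Your route is arguably cleaner in that it uses only the Strichartz machinery already present in Section~2, while the paper's route is more self-contained once the $H^1$ bound is in hand and also delivers an explicit growth rate. One small point to tighten: your energy estimate is formal as written, since a priori $u$ is only in $C(\mathbb{R};\mathcal{H}^1)$; you should either first invoke local $H^2$ well-posedness (as the paper does) and argue via the blow-up criterion, or explicitly run the estimate on regularized solutions and pass to the limit---the Bona--Smith scheme you mention for continuity would cover this if stated for the existence step as well.
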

Using a similar approach in the analysis of the Cauchy problem of  (\ref{1.1}) in $\mathcal{H}^s$ with $\frac{1}{2}\leq s\leq 1$, we can also deduce the global well-posedness of (\ref{1.01}) in $\mathcal{K}^s$ with $\frac{1}{2}\leq s\leq 1$.
\begin{theorem}
\label{theorem 1.6}
Let $\frac{1}{2}\leq s \leq 1$. Given $u_0 \in \mathcal{K}^s : = L_x^2 H_y^s(\mathbb{R}\times\mathbb{T}) \cap H_x^1 L_y^2(\mathbb{R}\times\mathbb{T})$, then there exists a unique global solution $u \in C\left(\mathbb{R} ; \mathcal{K}^s\right)$ to $(\ref{1.01})$ with $u(0) = u_0$. Moreover, for every $T>0$, the flow map $u_0 \in \mathcal{K}^s \mapsto u \in C\left([-T, T], \mathcal{K}^s\right)$ is continuous.
\end{theorem}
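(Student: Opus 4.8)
The plan is to run, on the cylinder, the same three-step program that proves Theorem~\ref{theorem 1.4}: local well-posedness by a fixed-point argument in a Strichartz space adapted to $\mathcal{K}^s$, a priori control of the $\mathcal{K}^{1/2}$ norm from the conservation laws, and globalization by persistence of regularity. The decisive structural observation is that the only dispersion available comes from the Schrödinger operator in the $x$-variable, and this dispersion is completely insensitive to whether $y$ ranges over $\mathbb{R}$ or over $\mathbb{T}$. Hence essentially every estimate from the plane case transfers to the cylinder once the Fourier transform in $y$ is replaced by a Fourier series.

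First I would establish the anisotropic Strichartz estimates on $\mathbb{R}_x\times\mathbb{T}_y$. Expanding $f=\sum_{n\in\mathbb{Z}}f_n(x)e^{iny}$, the linear propagator acts as
\[ e^{it(\partial_x^2-|D_y|)}f=\sum_{n\in\mathbb{Z}}e^{-it|n|}\big(e^{it\partial_x^2}f_n\big)(x)\,e^{iny}, \]
so that $\big\|e^{it(\partial_x^2-|D_y|)}f\big\|_{L^2_y(\mathbb{T})}^2=\sum_n|e^{it\partial_x^2}f_n(x)|^2$; the non-dispersive phases $e^{-it|n|}$ disappear inside the $L^2_y$ norm. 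Applying Minkowski's inequality in $n$ (licit since $q,r\ge 2$) together with the one-dimensional Schrödinger Strichartz estimates in $x$, mode by mode and with constants uniform in $n$, yields
\[ \big\|e^{it(\partial_x^2-|D_y|)}f\big\|_{L^q_t L^r_x L^2_y}\lesssim\|f\|_{L^2_{x,y}} \]
for every $x$-admissible pair ($\tfrac{2}{q}+\tfrac1r=\tfrac12$), together with its inhomogeneous counterpart. Since $\partial_x$ and $\langle D_y\rangle^s$ commute with the flow, the same bounds hold with $\mathcal{K}^s$-type weights inserted. This is exactly the Strichartz theory of the plane case, and the compactness of $\mathbb{T}_y$ (in particular the presence of the genuine zero mode $n=0$) is harmless, precisely because no decay in $y$ is ever invoked.

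With these estimates in hand, local well-posedness in $\mathcal{K}^s$ follows from a contraction mapping argument on the Duhamel formula, in a ball of $C([-T,T];\mathcal{K}^s)\cap L^4_t([-T,T];L^\infty_x H^s_y)$, the nonlinear term being controlled by Hölder's inequality in $(t,x)$ together with a fractional Leibniz and chain rule in the $y$-variable on $\mathbb{T}$; the anisotropic Sobolev embedding $\mathcal{K}^{1/2}\hookrightarrow L^6(\mathbb{R}\times\mathbb{T})$ replaces its $\mathbb{R}^2$ analogue. I would arrange the estimates so that the existence time $T$ depends only on $\|u_0\|_{\mathcal{K}^{1/2}}$. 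Conservation of mass and energy, combined with the cylinder analogue of Remark~\ref{remark 1.21}, then bounds $\|u(t)\|_{\mathcal{K}^{1/2}}$ uniformly in $t$ for $(\ref{1.01})$, which upgrades the local solution to a global one in the energy space; for $\tfrac12<s\le1$ an iteration with uniform time steps, i.e. a Gronwall-type argument on the $H^s_y$ norm whose growth rate is governed by the already-controlled $\mathcal{K}^{1/2}$ norm, gives finiteness of $\|u(t)\|_{\mathcal{K}^s}$ on every compact time interval, hence global existence. Uniqueness and continuity of the flow map are obtained from a difference estimate at the energy level, interpolated against the a priori $\mathcal{K}^s$ bound.

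The main obstacle is not the passage from $\mathbb{R}^2$ to the cylinder, which the Strichartz reduction above renders essentially automatic, but the low regularity of the nonlinearity $|u|^{p-1}u$ for $1<p\le2$. Since $p$ need not be an odd integer, the map $z\mapsto|z|^{p-1}z$ fails to be $C^1$ (let alone smooth) once $p<2$, so propagating $H^s_y$ regularity up to $s=1$ in the regime $p-1<s$ requires a fractional chain rule valid for merely Hölder-regular derivatives, and forces the flow map to be only continuous rather than Lipschitz or $C^1$. Controlling the difference of two nonlinearities in a norm strong enough for uniqueness directly in $C([-T,T];\mathcal{K}^s)$, without artificially assuming extra $L^4_tL^\infty_{x,y}$ integrability, is a subtlety of the kind flagged in Remark~\ref{remark 1.2}, and it is here that the bulk of the technical work lies.
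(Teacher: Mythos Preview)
Your reduction of the Strichartz theory to mode-by-mode one-dimensional Schr\"odinger estimates is correct and matches the paper's Remark~\ref{remark 2.3}. The genuine gap is in the globalization step, and it is exactly the heart of the problem.

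You assert that the local existence time $T$ can be arranged to depend only on $\|u_0\|_{\mathcal{K}^{1/2}}$. This is not justified, and in fact the paper never proves such a statement. The obstruction is that $H^{1/2}_y(\mathbb{T})$ fails (logarithmically) to embed into $L^\infty_y(\mathbb{T})$, so the nonlinear estimate $\||u|^{p-1}u\|_{H^s_y}\lesssim\|u\|_{L^\infty_y}^{p-1}\|u\|_{H^s_y}$ cannot be closed using $\mathcal{K}^{1/2}$ alone. Consequently your ``iteration with uniform time steps'' does not go through as stated: the growth rate of $\|u(t)\|_{L^2_xH^s_y}$ is not governed by the conserved $\mathcal{K}^{1/2}$ norm but by $\|u(t)\|_{L^\infty_y}$, which you have no direct control over. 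The paper handles $\tfrac12<s\le1$ via the Brezis--Gallou\"et inequality on $\mathbb{R}_x\times\mathbb{T}_y$ (Remark~\ref{remark 2.6}), which bounds $\|u_N\|_{\ell^1_NL^2_x}$ by $\|u\|_{L^2_xH^{1/2}_y}$ times a \emph{logarithm} of $\|u\|_{L^2_xH^s_y}$; plugging this into the Duhamel estimate yields a nonlinear Gronwall inequality of Osgood type, $N'\lesssim N(\log N)^{\alpha}$ with $\alpha<1$, whose integration gives a double-exponential but finite bound.

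For $s=\tfrac12$ the paper takes an entirely different route you do not mention: it constructs weak solutions by compactness and then proves uniqueness by a Yudovich-type argument, exploiting the Trudinger estimate $\|u\|_{L^k(\mathbb{T})}\le C\sqrt{k}\,\|u\|_{H^{1/2}(\mathbb{T})}$ (Remark~\ref{remark 2.8}) to obtain a differential inequality $g'\lesssim k^{\gamma}g^{1-1/k}$ and then sending $k\to\infty$. Your difference estimate ``at the energy level'' would need to confront the same logarithmic failure of embedding, and without the Trudinger/Yudovich mechanism it does not close. Finally, working in $C_t\mathcal{K}^s\cap L^4_tL^\infty_xH^s_y$ is unnecessary for $1<p\le2$ (the paper closes the contraction in $C_t\mathcal{K}^s$ alone, obtaining unconditional uniqueness) and is precisely the device needed only for $2<p\le5$, as discussed in Section~4.
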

\subsubsection{The stability result}
In \cite{ref1}, Y. Bahri, S.Ibrahim and H. Kikuchi have studied the solitary wave of $(\ref{1.1})_{-}$ with $1<p<5$.  With the assumption of the global well-posedness of $(\ref{1.1})_{-}$ in the energy space, they have shown the stability of the set of ground states. In \cite{ref18}, Y. Bahri, S.Ibrahim and H. Kikuchi have studied the ground states of $(\ref{1.01})_{-}$ with $1<p<5$, and they have shown the orbital stability of the ground states with small frequencies under the assumption of the global well-posedness of $(\ref{1.01})_{-}$ in the energy space. Since we have already confirmed the global well-posedness of $(\ref{1.1})_{-}$ and $(\ref{1.01})_{-}$ with $1<p\leq 2$ in the energy space, we can reformulate the stability results in \cite{ref1} and \cite{ref18} with $1<p\leq 2$.\\\\
\textbf{Stability result of $(\ref{1.1})_{-}$}:\\\\
We first introduce the ground states corresponding to $(\ref{1.1})_{-}$. Let $\omega>0$, we observe that $\phi_{\omega}(x,y) e^{i \omega t}$ is a solution to $(\ref{1.1})_{-}$ if and only if $\phi_{\omega}$ solves the following elliptic equation
\begin{equation}
\label{4.2}
-\partial_{x}^2 \phi_{\omega}+\left|D_{y}\right| \phi_{\omega}+\omega \phi_{\omega}-\left|\phi_{\omega}\right|^{p-1} \phi_{\omega}=0, \quad (x,y) \in \mathbb{R}^{2},
\end{equation}
where $1<p\leq 2$. \\\\
We also define the following energy functional $\mathcal{E}_{\omega}$ which is corresponding to (\ref{4.2}):
\begin{align*}
\mathcal{E}_{\omega}(v):=\frac{1}{2}\left\|\partial_{x} v\right\|_{L^{2}(\mathbb{R}^2)}^{2}+\frac{1}{2}\left\|\left|D_{y}\right|^{\frac{1}{2}} v\right\|_{L^{2}(\mathbb{R}^2)}^{2}+\frac{\omega}{2}\|v\|_{L^{2}(\mathbb{R}^2)}^{2}-\frac{1}{p+1}\|v\|_{L^{p+1}(\mathbb{R}^2)}^{p+1}.
\end{align*}
We can easily observe that $\phi_{\omega} \in \mathcal{H}^{\frac{1}{2}}$ is a solution to (\ref{4.2}) if and only if $\phi_{\omega}$ is a critical point of the fractional $\mathcal{E}_{\omega}$. The ground state $Q_{\omega} \in \mathcal{H}^{\frac{1}{2}}$ is defined as a solution to (\ref{4.2}) which is a minimizer of the corresponding functional $\mathcal{E}_{\omega}$ for all non-trivial solutions to (\ref{4.2}). According to the result in \cite{ref1}, we have already the existence of the ground states of (\ref{4.2})for every $\omega>0$.\\\\
Then we focus on the orbital stability of the ground states. The stability is defined as follows.
\begin{definition}
(i) Let $\Gamma \subset \mathcal{H}^{\frac{1}{2}}$. We say that $\Gamma$ is stable if the following statment holds: For any $\varepsilon > 0$, there exists $\delta > 0$ such that for any $u_0 \in \mathcal{H}^{\frac{1}{2}}$ which sitisfies $\inf _{v \in \Gamma}\left\|u_{0}-v\right\|_{\mathcal{H}^{\frac{1}{2}}}<\delta$, the corresponding solution $u(t)$ satifies 
\begin{align*}
\sup _{t \in \mathbb{R}} \inf _{v \in \Gamma}\|u(t)-v\|_{\mathcal{H}^{\frac{1}{2}}}<\varepsilon.
\end{align*}
(ii) Let $Q_{\omega}$ be the ground state of (\ref{4.2}). We say that $e^{i \omega t} Q_{\omega}$ is orbitally stable if its orbit
\begin{align*}
\mathcal{O}_{\omega}=\left\{e^{i \vartheta} Q_{\omega}\left(x+x_{1}, y+y_{1}\right) \mid \vartheta \in \mathbb{R},\left(x_{1}, y_{1}\right) \in \mathbb{R}^{2}\right\}
\end{align*}
is stable.
\end{definition}
We recall the definitions of $M(u)$ and $H_{-}(u)$ in (\ref{1.2}) and (\ref{1.3}). To reformulate the stability result in \cite{ref1}, we introduce the following minimization problem:
\begin{align*}
\mathcal{I}(\eta):=\inf_{M(u)=\eta\ } H_{-}(u) \text{ for each } \eta > 0.
\end{align*}
We denote by $\Gamma(\eta)$ the set of minimizers, which is
\begin{align*}
\Gamma(\eta):=\{u \in \mathcal{H}^{\frac{1}{2}} \mid H_{-}(u)=\mathcal{I}(\eta), M(u)=\eta\}.
\end{align*}
Since we have proved the global well-posedness of $(\ref{1.1})_{-}$ in the energy space $\mathcal{H}^{\frac{1}{2}}$, combined with the stability result in \cite{ref1}, we complete the proof of the following stability result of $(\ref{1.1})_{-}$.
\begin{corollary}
\label{corollary 1.7}
For $\eta > 0$, $\Gamma(\eta)$ is non-empty and $\Gamma(\eta)$ is stable under the flow of $(\ref{1.1})_{-}$.
\end{corollary}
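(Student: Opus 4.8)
The plan is to treat $\Gamma(\eta)$ as the set of constrained minimizers of $\mathcal{I}(\eta)=\inf_{M(u)=\eta}H_{-}(u)$ and to follow the variational concentration--compactness scheme of Cazenave--Lions, exactly as in \cite{ref1}; the one new ingredient we supply is the global well-posedness from Theorem \ref{theorem 1.4}, which removes the a priori assumption under which \cite{ref1} established stability. Conservation of both $M$ and $H_{-}$ along the flow of $(\ref{1.1})_{-}$, valid for all times by Theorem \ref{theorem 1.4}, is what allows the time-independent variational information to be propagated globally in time.

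For the non-emptiness of $\Gamma(\eta)$ I would first record that $H_{-}$ is bounded below and coercive on the constraint $\{M(u)=\eta\}$: this is precisely the computation of Remark \ref{remark 1.21}, where the embedding $\mathcal{H}^{\frac{1}{2}}\hookrightarrow L^{6}_{x,y}(\mathbb{R}^2)$, Hölder's inequality, and the subcriticality bound $\tfrac32 p-\tfrac32<2$ control $\|u\|_{L^{p+1}}^{p+1}$ by a sublinear power of the kinetic energy; in particular every minimizing sequence is bounded in $\mathcal{H}^{\frac{1}{2}}$. Next I would verify the two strict inequalities that drive the argument. That $\mathcal{I}(\eta)<0$ follows by inserting the anisotropic spatial rescaling $u(\sigma x,\sigma^{2}y)$, renormalised to keep the mass equal to $\eta$: under it the kinetic energy is of order $\sigma^{2}$ while the potential energy is of order $\sigma^{3(p-1)/2}$, and since $p\le 2$ forces $3(p-1)/2<2$, the negative potential term dominates as $\sigma\to 0^{+}$. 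The strict subadditivity $\mathcal{I}(\eta)<\mathcal{I}(\alpha)+\mathcal{I}(\eta-\alpha)$ for $0<\alpha<\eta$ then follows from the mass rescaling $u\mapsto\sqrt{\theta}\,u$, under which the kinetic energy scales linearly in the mass factor while the focusing term scales as $\theta^{(p+1)/2}$, i.e.\ strictly superlinearly since $p>1$; hence $\mathcal{I}(\theta\eta)<\theta\,\mathcal{I}(\eta)$ for $\theta>1$. With these two facts in hand I would apply the concentration--compactness principle to the mass densities $|u_{n}|^{2}$ of a minimizing sequence: vanishing is excluded because it would force $\|u_{n}\|_{L^{p+1}}\to 0$ and hence $\mathcal{I}(\eta)\ge 0$, and dichotomy is excluded by the strict subadditivity, so that, after translation in $(x,y)$, a subsequence converges strongly in $\mathcal{H}^{\frac{1}{2}}$ to a minimizer. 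Thus $\Gamma(\eta)\neq\emptyset$; the existence of ground states is in any case already provided by \cite{ref1}.

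For stability I would argue by contradiction. If $\Gamma(\eta)$ were not stable there would exist $\varepsilon_{0}>0$, initial data $u_{0,n}$ with $\inf_{v\in\Gamma(\eta)}\|u_{0,n}-v\|_{\mathcal{H}^{\frac{1}{2}}}\to 0$, and times $t_{n}$ with $\inf_{v\in\Gamma(\eta)}\|u_{n}(t_{n})-v\|_{\mathcal{H}^{\frac{1}{2}}}\ge\varepsilon_{0}$, where $u_{n}$ is the global solution with datum $u_{0,n}$ furnished by Theorem \ref{theorem 1.4}. Since $u_{0,n}$ approaches $\Gamma(\eta)$ in $\mathcal{H}^{\frac{1}{2}}$, we have $M(u_{0,n})\to\eta$ and $H_{-}(u_{0,n})\to\mathcal{I}(\eta)$, and by the conservation laws these convergences persist at the times $t_{n}$. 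Thus $(u_{n}(t_{n}))$ is, after the harmless rescaling of the mass to exactly $\eta$, a minimizing sequence for $\mathcal{I}(\eta)$, and the compactness established above forces a subsequence to converge, up to translation, to an element of $\Gamma(\eta)$ in $\mathcal{H}^{\frac{1}{2}}$. This contradicts the lower bound $\varepsilon_{0}$, and the stability follows.

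The step I expect to be the main obstacle is the concentration--compactness analysis itself in the anisotropic energy space $\mathcal{H}^{\frac{1}{2}}$: the operators $\partial_{x}$ and $|D_{y}|^{1/2}$ scale differently, so the embedding into $L^{p+1}$, the translation invariance, and the exclusion of dichotomy must all be handled with the correct anisotropic homogeneity. This delicate analysis is precisely what is carried out in \cite{ref1}; since it is purely time-independent, the only gap between \cite{ref1} and an unconditional stability statement is the global existence of the flow, which is exactly what Theorem \ref{theorem 1.4} supplies, so combining the two yields Corollary \ref{corollary 1.7}.
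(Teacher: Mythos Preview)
Your proposal is correct and follows exactly the approach the paper takes: the paper does not give an independent proof of Corollary~\ref{corollary 1.7} but simply records that the variational analysis and conditional stability in \cite{ref1} become unconditional once Theorem~\ref{theorem 1.4} supplies the global flow in $\mathcal{H}^{\frac12}$. You have supplied a faithful sketch of the Cazenave--Lions concentration--compactness argument from \cite{ref1} (coercivity via Remark~\ref{remark 1.21}, negativity of $\mathcal{I}(\eta)$ via the anisotropic scaling, strict subadditivity via the mass rescaling, and the standard contradiction for stability), so your write-up is more detailed than the paper's one-line justification but identical in substance.
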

\begin{remark}
Let $Q_{\omega}$ be the ground state to (\ref{4.2}). Then we have
\begin{align*}
Q_{\omega}(x, y)=\omega^{\frac{1}{p-1}} Q_{1}(\sqrt{\omega} x, \omega y).
\end{align*}
By setting
\begin{align*}
\omega(\eta)=\left(\frac{2 \eta}{\left\|Q_{1}\right\|_{L^{2}(\mathbb{R}^2)}^{2}}\right)^{\frac{2(p-1)}{7-3p}}
\end{align*}
for each $\eta > 0$, we observe that $Q_{\omega(\eta)} \in \Gamma(\eta)$ and $\mathcal{O}_{\omega(\eta)} \subset \Gamma(\eta)$.  Thus from Corollary \ref{corollary 1.7}, we obtain the stability of the set of ground states. Moreover, to obtain the orbital stability of $e^{i \omega t} Q_{\omega}$, we only have to prove the uniqueness of the ground state. In fact, uniqueness of the ground states of the half wave equations has been proved in \cite{ref15} and \cite{ref16}. However, due to the anisotropy of the equation $(\ref{1.1})_{-}$, it is unknown whether the method of \cite{ref15} and \cite{ref16} can be applied to the equation $(\ref{1.1})_{-}$.
\end{remark}
\noindent\textbf{Stability result of $(\ref{1.01})_{-}$}:\\\\
The ground states $Q_{\omega} \in \mathcal{K}^{\frac{1}{2}}$ corresponding to $(\ref{1.01})_{-}$ can be defined as in the previous part by replacing $\mathbb{R}^2$ with $\mathbb{R}_{x}\times\mathbb{T}_{y}$. Accroding to the result in \cite{ref18}, we have already the existence of the ground states $Q_{\omega}$ for every $\omega>0$.\\\\
Then we focus on the oribital stability of the ground states. The definition of the stability can be referred to the previous part. Since we have proved the global well-posedness of $(\ref{1.01})_{-}$ in the energy space $\mathcal{K}^{\frac{1}{2}}$, combined with the stability result in \cite{ref18}, we complete the proof of the following orbital stability result of the ground states with small frequencies.
\begin{corollary}
Let $1<p\leq 2$ and $\omega_{p} = \frac{4}{(p-1)(p+3)}$. Then there exits $\omega_{*}\in\left(0, \omega_{p}\right]$ such that the ground state standing wave $e^{i \omega t} Q_{\omega}$ is orbitally stable under the flow of $(\ref{1.01})_{-}$ when $0<\omega<\omega_{*}$.
\end{corollary}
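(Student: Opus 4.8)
The plan is to deduce the statement by combining the conditional orbital stability established in \cite{ref18} with the global well-posedness furnished by Theorem \ref{theorem 1.6}. Recall that in \cite{ref18} the ground state $Q_{\omega}$ is constructed variationally and the orbital stability of the standing wave $e^{i\omega t}Q_{\omega}$ is proved for small frequencies \emph{under the standing assumption} that $(\ref{1.01})_{-}$ is globally well-posed in the energy space $\mathcal{K}^{\frac{1}{2}}$. Thus the only ingredient that was previously missing is exactly the content of Theorem \ref{theorem 1.6} with $s=\frac{1}{2}$: every datum $u_{0}\in\mathcal{K}^{\frac{1}{2}}$ launches a unique global solution $u\in C(\mathbb{R};\mathcal{K}^{\frac{1}{2}})$ with continuous dependence on $u_{0}$, along which the mass $M$ and the energy $H_{-}$ are conserved in the sense of (\ref{1.2})--(\ref{1.3}). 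Once this is in hand, the corollary holds verbatim with the same threshold $\omega_{*}\in(0,\omega_{p}]$ produced in \cite{ref18}.

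To make the argument self-contained I would retrace the Lyapunov/concentration-compactness scheme behind the conditional result. First, for $0<\omega<\omega_{*}$ the ground state satisfies a Vakhitov--Kolokolov type slope condition, namely the strict monotonicity of $\omega\mapsto M(Q_{\omega})$ (equivalently the strict convexity of the action $\omega\mapsto\mathcal{E}_{\omega}(Q_{\omega})$, using that $Q_{\omega}$ is a critical point of $\mathcal{E}_{\omega}$); this is where the restriction to small frequencies and the explicit value $\omega_{p}=\frac{4}{(p-1)(p+3)}$ enter, and it is the spectral computation carried out in \cite{ref18}. Second, this slope condition upgrades the variational characterization of $Q_{\omega}$ into a coercive lower bound for the second variation of $\mathcal{E}_{\omega}$ restricted to the subspace orthogonal to the symmetry directions (phase and the $x,y$ translations), so that $\mathcal{E}_{\omega}$ acts as a Lyapunov functional controlling the $\mathcal{K}^{\frac{1}{2}}$-distance to the orbit $\mathcal{O}_{\omega}$. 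Third, one argues by contradiction: a sequence of data approaching $\mathcal{O}_{\omega}$ whose flows escape the orbit would, after extracting time slices and concentrating profiles modulo the invariances $(\vartheta,x_{1},y_{1})$, violate the coercivity estimate, using that $M$ and $H_{-}$ are preserved.

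The crucial point is that every step of this scheme is carried out for the global-in-time solution: the coercivity estimate must be propagated for all $t\in\mathbb{R}$, which requires that the solution persist in $\mathcal{K}^{\frac{1}{2}}$ for all time and that the conservation laws remain valid throughout, while the uniform-in-time boundedness of $\|u(t)\|_{\mathcal{K}^{\frac{1}{2}}}$ noted earlier makes the compactness step admissible. All of these are precisely what the global existence, uniqueness, and continuity of the flow map in Theorem \ref{theorem 1.6} guarantee. I therefore expect the main obstacle to have been the global well-posedness itself---the non-dispersive half-wave direction in $y$ is exactly what forced \cite{ref18} to impose it as a hypothesis---and, that obstacle now removed by Theorem \ref{theorem 1.6}, the remaining work is the bookkeeping of verifying that \cite{ref18} invokes nothing beyond global existence, uniqueness, continuous dependence, and the two conservation laws.
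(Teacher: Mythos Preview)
Your proposal is correct and follows exactly the paper's approach: the corollary is obtained by combining the conditional orbital stability result of \cite{ref18} with the global well-posedness in $\mathcal{K}^{\frac{1}{2}}$ supplied by Theorem \ref{theorem 1.6}. The paper in fact gives no further argument beyond this citation, so your additional sketch of the Lyapunov/Vakhitov--Kolokolov mechanism is more detail than the paper itself provides, but it is accurate and harmless.
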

\begin{remark}
For $1<p<5$, Y. Bahri, S.Ibrahim and H. Kikuchi have proved in \cite{ref18} that, there exists $\omega_{*}\in\left(0, \omega_{p}\right]$ such that the ground state $Q_\omega$ coincides with the line soliton
\begin{align*}
R_{\omega}(x) := \left(\frac{(p+1) \omega}{2}\right)^{\frac{1}{p-1}} \operatorname{sech}^{\frac{2}{p-1}}\left(\frac{(p-1) \sqrt{\omega}}{2} x\right), \quad x\in \mathbb{R}
\end{align*}
for all $0<\omega\leq\omega_{*}$ and that the ground state $Q_{\omega}$ depends on $y$ for all $\omega>\omega_{*}$.
\end{remark}
\subsection{Structure of the paper}
In Section 2, we first introduce the notation in this paper, and then we give the Strichartz estimate for nonlinear half wave Schrödinger equations, which is the fundamental estimate in the proof of our main theorem. We also introduce the Brezis–Gallouët type inequality to obtain a logarithm type control. These arguments will be used in the proof of the main theorem in Section 3.\\\\
In Section 3, we prove our main theorem which implies the global well-posedness of (\ref{1.1}) in $\mathcal{H}^s$ with $\frac{1}{2}\leq s \leq 1$. More precisely, with $\frac{1}{2}< s \leq 1$, we use the Strichartz estimate and the Brezis-Gallouët type inequality to prove the global well-posedness of (\ref{1.1}) in $\mathcal{H}^s$. To show the global well-posedness of (\ref{1.1}) in the energy space $\mathcal{H}^{\frac{1}{2}}$, we use the classical approach to construct the weak solution in the energy space $\mathcal{H}^{\frac{1}{2}}$, and then we use an argument in \cite{ref13} to prove the uniqueness of the weak solution. The continuity of the weak solution comes from the mass and the energy conservations. Using a similar approach in the proof of the main theorem, we also infer the global well-posedness of (\ref{1.01}) in $\mathcal{K}^s$ with $\frac{1}{2}\leq s \leq 1$. \\\\
In Section 4, we discuss the well-posedness of half wave Schrödinger equations (\ref{1.1}) with $2<p\leq 5$. In the case of $2<p\leq 5$, we still can obtain the local well-posedness in $\mathcal{H}^s$ for some $s>\frac{1}{2}$, but the global well-posedness in $\mathcal{H}^s$ is unknown for any $s>\frac{1}{2}$ since we do not have a good approach to control the $L_{x,y}^{\infty}$ norm of solutions yet. The global well-posedness in the energy space $\mathcal{H}^{\frac{1}{2}}$ is also unknown, and we cannot adapt directly the argument in Theorem \ref{theorem 3.1} to deduce the uniqueness of weak solution in this case.
\section{Preliminaries}  
\subsection{Notation}
We define the Fourier trasform on $\mathbb{R}_{x}$ by
\begin{align*}
\hat{g}(\xi):=\mathcal{F}_{x}(g)(\xi)=\frac{1}{2\pi}\int_{\mathbb{R}} \mathrm{e}^{-i x \xi} g(x) d x.
\end{align*}
Similarly, we also define the Fourier transform on $\mathbb{R}_y$ by
\begin{align*}
h_{\eta}:=\mathcal{F}_{y}(h)(\eta)=\frac{1}{2\pi}\int_{\mathbb{R}} e^{-i\eta y} h(y) d y.
\end{align*}
We also define the Fourier transform on $\mathbb{T}_y$ by
\begin{align*}
h_{N}:=\mathcal{F}_{y}(h)(N)=\frac{1}{2\pi}\int_{\mathbb{T}} e^{-iN y} h(y) d y, \quad N \in \mathbb{Z}.
\end{align*}
Then we define the full Fourier transform on $\mathbb{R}_x \times \mathbb{R}_y$ by
\begin{align*}
(\mathcal{F} U)(\xi, \eta)=\frac{1}{2\pi}\int_{\mathbb{R}} \hat{U}(\xi, y) \mathrm{e}^{-i \eta y} d y=\hat{U}_{\eta}(\xi).
\end{align*}
We recall the definition of the ``Japanese bracket",
\begin{align*}
\langle\xi\rangle :=\left(1+|\xi|^{2}\right)^{1 / 2}, \quad \forall \xi \in \mathbb{R}.
\end{align*}
\subsection{Strichartz estimate}
By a simple modification of the proof of the inhomogeneous extended Strichartz estimates in \cite{ref12}, we can deduce the following Strichartz estimate for half wave Schrodinger equations on $\mathbb{R}^2$.
\begin{lemma}
\label{lemma 2.1}
Let $\left(q_{1}, r_{1}\right),\left(q_{2}, r_{2}\right)$ be two pairs of exponents satisfying
\begin{align*}
\frac{2}{q_{j}}+\frac{1}{r_{j}}=\frac{1}{2}, \quad  q_{j}>2, \quad j= 1, 2.
\end{align*}
Let $s \geq 0$ and $u_0 \in L_x^2 H_{y}^{s}$, $I$ be an interval of $\,\mathbb{R}$ containing 0, and let $f \in L_{t}^{q_{1}^{\prime}} L_{x}^{r_{1}^{\prime}} H_{y}^{s}\left(I \times \mathbb{R}^{2}\right)$. Then the unique solution $u \in C\left(I, \mathcal{{S}}^{\prime}\left(\mathbb{R}^{2}\right)\right) $ of 
\begin{equation}
i \partial_{t} u+(\partial_x^2-|D_y|) u=f, u(0)=u_{0}
\end{equation}
belongs to $C\left(I, L_x^{2} H_{y}^{s}\left(\mathbb{R}^{2}\right)\right) \cap L_{t}^{q_{2}} L_{x}^{r_{2}} H_{y}^{s}\left(I \times \mathbb{R}^{2}\right)$, with the estimate 
\begin{equation}
\label{2.2}
\sup _{t \in I}\|u(t)\|_{L_x^{2} H_y^s(\mathbb{R}^2)}+\|u\|_{L_{t}^{q_{2}}\left(I; L_{x}^{r_{2}} H_y^s(\mathbb{R}^2)\right)} \leq C\left(\left\|u_{0}\right\|_{L_x^{2}H_y^s(\mathbb{R}^2)}+\|f\|_{L_{t}^{q_{1}^{\prime}}\left(I; L_{x}^{r_{1}^{\prime}}H_y^s(\mathbb{R}^2)\right)}\right).
\end{equation}
\end{lemma}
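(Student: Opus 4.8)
The plan is to exploit the separation of variables in the symbol of the propagator and thereby reduce the estimate to a vector-valued one-dimensional Schrödinger Strichartz estimate. Writing $\xi$ and $\eta$ for the Fourier variables dual to $x$ and $y$, the symbol of $\partial_x^2 - |D_y|$ is $-\xi^2 - |\eta|$, which splits as a sum; hence the free propagator factorises as
$$e^{it(\partial_x^2 - |D_y|)} = e^{it\partial_x^2}\, e^{-it|D_y|},$$
and the two commuting factors act in separate variables. The crucial point is that $e^{-it|D_y|}$ is a Fourier multiplier of modulus one in $y$, hence an isometry of $H_y^s$ for every $t$; it therefore leaves every $H_y^s$-valued norm in $(t,x)$ invariant and can be inserted or removed at will. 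Consequently the whole problem is equivalent to the one-dimensional Schrödinger flow $e^{it\partial_x^2}$ acting on functions of $x$ with values in the Hilbert space $H_y^s$, and the admissibility relation $\frac{2}{q_j}+\frac{1}{r_j}=\frac12$ is exactly the Strichartz admissibility for $e^{it\partial_x^2}$ on $\mathbb{R}_x$ (decay rate $|t|^{-1/2}$).

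First I would record the two fundamental inputs in this vector-valued setting. The energy identity $\|e^{it(\partial_x^2-|D_y|)}g\|_{L_x^2 H_y^s} = \|g\|_{L_x^2 H_y^s}$ is immediate from Plancherel, since the propagator is a unitary multiplier. The dispersive estimate survives vectorisation for free: $e^{it\partial_x^2}$ is convolution in $x$ against a scalar kernel $K_t$ with $\|K_t\|_{L_x^\infty}\lesssim |t|^{-1/2}$, so Minkowski's integral inequality pulls the $H_y^s$-norm inside the $x$-integral and yields
$$\bigl\|e^{it(\partial_x^2-|D_y|)}g\bigr\|_{L_x^\infty H_y^s} \lesssim |t|^{-1/2}\,\|g\|_{L_x^1 H_y^s}.$$
These are precisely the energy and decay hypotheses required by the abstract Strichartz machinery, now with the underlying Hilbert space taken to be $H_y^s$ rather than $\mathbb{C}$.

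With these two estimates in hand, I would invoke the abstract ($TT^*$/Keel--Tao) framework in the Hilbert-space-valued form furnished by the extended Strichartz estimates of \cite{ref12}: the homogeneous bound $\|e^{it(\partial_x^2-|D_y|)}u_0\|_{L_t^{q_2}L_x^{r_2}H_y^s}\lesssim \|u_0\|_{L_x^2 H_y^s}$ follows from the Hardy--Littlewood--Sobolev inequality applied to the decay estimate, and the dual homogeneous bound follows by duality in $H_y^s$. The inhomogeneous estimate with two distinct admissible pairs, i.e. the control of the Duhamel term $\int_0^t e^{i(t-\tau)(\partial_x^2-|D_y|)}f(\tau)\,d\tau$ in $L_t^{q_2}L_x^{r_2}H_y^s$ by $\|f\|_{L_t^{q_1'}L_x^{r_1'}H_y^s}$, is then obtained from the untruncated bilinear form via the Christ--Kiselev lemma, which applies because $q_1'<2<q_2$. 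The time continuity $u\in C(I;L_x^2 H_y^s)$ comes from a standard density argument once the estimates are established.

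The main obstacle is essentially bookkeeping rather than a genuine difficulty: one has to check that every step of the abstract argument---the $TT^*$ identity, the Hardy--Littlewood--Sobolev step, and the Christ--Kiselev truncation---goes through verbatim with $\mathbb{C}$ replaced by $H_y^s$, which it does precisely because $H_y^s$ is a Hilbert space and the Schrödinger kernel $K_t$ is scalar, so no vector-valued singular-integral theory is needed. The only point requiring mild care is the admissible range: for the homogeneous term one needs $(q_2,r_2)$ to be genuinely Strichartz-admissible for the one-dimensional flow, while for the Duhamel term the inhomogeneous extended estimates of \cite{ref12} accommodate the full stated range of pairs.
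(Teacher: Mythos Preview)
Your proposal is correct and is precisely the ``simple modification'' of \cite{ref12} that the paper invokes without further detail: factor the propagator as $e^{it\partial_x^2}e^{-it|D_y|}$, use that the half-wave factor is a pointwise-in-$x$ isometry of $H_y^s$, and run the one-dimensional Schr\"odinger Strichartz argument with values in the Hilbert space $H_y^s$. The paper gives no proof beyond this reference, so your write-up is in fact a fleshing-out of exactly the intended argument.
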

\begin{remark}
Lemma \ref{lemma 2.1} is the fundamental estimate in the proof of Theorem \ref{theorem 3.1}, and we use the estimates (\ref{2.2}) in the case of $\frac{1}{2} <s \leq 1$ and $s=0$.
\end{remark}
\begin{remark}
\label{remark 2.3}
In fact, we have also the Strichartz estimate for wave guide Schrödinger equations on $\mathbb{R}_{x}\times\mathbb{T}_{y}$, which has the same form as Lemma \ref{lemma 2.1} by replacing $\mathbb{R}^2$ wih $\mathbb{R}_{x}\times\mathbb{T}_{y}$.
\end{remark}
\subsection{Brezis–Gallouët type inequality}
A basic tool along the proof of Theorem \ref{theorem 3.1} will be the following inequality, which is similar to the inequality obtained by H. Brezis and T. Gallouët in \cite{ref2}. 
\begin{lemma}
Given $s>\frac{1}{2}$, there exists $C_s > 0$ such that, $\forall v \in H^s(\mathbb{R})$, 
\begin{equation}
\label{2.3}
\|v_{\eta}\|_{L_{\eta}^1 L_x^2 \left(\mathbb{R}^2\right)} \leq C_{s}\|v\|_{L_{x}^{2} H_{y}^{\frac{1}{2}}\left(\mathbb{R}^2\right)}\left[\log \left(1+\frac{\|v\|_{L_{x}^{2} H_y^{s}(\mathbb{R}^2)}}{\|v\|_{L_{x}^{2} H_y^{\frac{1}{2}}(\mathbb{R}^2)}}\right)\right]^{\frac{1}{2}}.
\end{equation}
\end{lemma}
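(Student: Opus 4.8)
The plan is to reduce (\ref{2.3}) to a one-dimensional weighted estimate in the $y$-frequency variable $\eta$, and then to run the classical Brezis--Gallou\"et splitting argument with an optimized cutoff radius. Assume $v \neq 0$ (otherwise both sides vanish). First I would set $g(\eta) := \|v_\eta\|_{L_x^2(\mathbb{R})}$, where $v_\eta = \mathcal{F}_y(v)(\eta)$. By Fubini and Plancherel in $x$, and up to a harmless constant coming from the Fourier normalization, the three quantities in (\ref{2.3}) become
\[
\|v_\eta\|_{L_\eta^1 L_x^2} = \int_{\mathbb{R}} g(\eta)\, d\eta, \qquad \|v\|_{L_x^2 H_y^\sigma}^2 = \int_{\mathbb{R}} \langle\eta\rangle^{2\sigma} g(\eta)^2\, d\eta \quad (\sigma \in \{\tfrac12,\, s\}).
\]
Writing $A := \|v\|_{L_x^2 H_y^{1/2}} > 0$ and $B := \|v\|_{L_x^2 H_y^s}$, the target reads $\int_{\mathbb{R}} g \leq C_s\, A\,[\log(1 + B/A)]^{1/2}$. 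Since $\langle\eta\rangle \geq 1$ and $2s > 1$ force $\langle\eta\rangle^{2s} \geq \langle\eta\rangle$ pointwise, we have $B \geq A$, hence $B/A \geq 1$; this monotonicity will let me absorb a lower-order term at the very end.

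Next I would split $\int_{\mathbb{R}} g\,d\eta = \int_{|\eta| \leq R} g + \int_{|\eta| > R} g$ for a radius $R \geq 1$ to be chosen, and treat each piece by Cauchy--Schwarz with the weight adapted to it. On the low-frequency part I pair $\langle\eta\rangle^{1/2} g$ with $\langle\eta\rangle^{-1/2}$ to get
\[
\int_{|\eta|\leq R} g\,d\eta \leq A\left(\int_{|\eta|\leq R}\langle\eta\rangle^{-1}\,d\eta\right)^{1/2} \leq C\,A\,[\log(1+R)]^{1/2},
\]
using $\int_{|\eta|\leq R}\langle\eta\rangle^{-1}\,d\eta \sim \log(1+R)$. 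On the high-frequency part I pair $\langle\eta\rangle^{s} g$ with $\langle\eta\rangle^{-s}$; here the hypothesis $s > \tfrac12$ is exactly what makes $\int_{|\eta|>R}\langle\eta\rangle^{-2s}\,d\eta \leq C_s\, R^{1-2s}$ convergent, yielding
\[
\int_{|\eta|>R} g\,d\eta \leq C_s\, B\, R^{\frac12 - s}.
\]

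Finally I would balance the two contributions by choosing $R = (B/A)^{1/(s-1/2)} \geq 1$, which renders the high-frequency term $\leq C_s A$. The low-frequency term then equals $C_s\, A\,[\log(1 + (B/A)^{1/(s-1/2)})]^{1/2}$, and the elementary bound $\log(1 + t^\alpha) \leq C_\alpha \log(1+t)$ (valid for $t \geq 1$, $\alpha > 0$) reduces it to $C_s\, A\,[\log(1+B/A)]^{1/2}$. Because $B/A \geq 1$ gives $\log(1+B/A) \geq \log 2 > 0$, the leftover additive $C_s A$ from the high-frequency piece is dominated by this same expression, and summing the two estimates yields (\ref{2.3}). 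The only genuinely delicate point is the optimization in $R$ combined with the logarithm manipulation --- that is, the standard Brezis--Gallou\"et balancing; the remaining ingredients are just Cauchy--Schwarz and Fubini, so I do not anticipate any substantial obstacle.
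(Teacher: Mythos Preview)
Your proposal is correct and follows essentially the same route as the paper: split $\int g(\eta)\,d\eta$ at a radius $R$, apply Cauchy--Schwarz with weights $\langle\eta\rangle^{-1/2}$ and $\langle\eta\rangle^{-s}$ on the low- and high-frequency pieces respectively, and optimize by taking $R^{s-1/2}=B/A$. Your write-up is in fact a bit more explicit than the paper's about absorbing the residual $C_s A$ term and handling the exponent inside the logarithm, but the argument is the same Brezis--Gallou\"et balancing.
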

\begin{proof}
We have, for every $R>0$,
\begin{align*}
\|v_\eta\|_{L_{\eta}^1 L_x^2\left(\mathbb{R}^2\right)} \leq \int_{\mathbb{R}} \|v_\eta\|_{L_x^2} d \eta = N_1(R) + N_2(R),
\end{align*}
\begin{align*}
N_{1}(R)=\int_{|\eta| \leq R}\|v_\eta\|_{L_x^2} d \eta, \quad  N_{2}(R)=\int_{|\eta|>R}\|v_\eta\|_{L_x^2}d \eta.
\end{align*}
By the Cauchy-Schwarz inequality,
\begin{align*}
N_{2}(R) \leq\|v\|_{L_x^2 H_y^{s} \left(\mathbb{R}^2\right)} \left(\int_{|\eta|>R}\langle\eta\rangle^{-2 s} d \eta\right)^{\frac{1}{2}} \leq K_s R^{-(s-\frac{1}{2})}
\|v\|_{L_x^2 H_y^{s} \left(\mathbb{R}^2\right)},
\end{align*}
\begin{align*}
N_{1}(R) \leq\|v\|_{L_x^2 H_y^{\frac{1}{2}} \left(\mathbb{R}^2\right)}\left( \int_{|\eta| \leq R}\langle\eta\rangle^{-1} d \eta\right)^{\frac{1}{2}} \leq A\left[\log \left(1+R\right)\right]^{1 / 2}\|v\|_{L_x^2 H_y^{\frac{1}{2}} \left(\mathbb{R}^2\right)}.
\end{align*}
The inequality follows by choosing $R$ such that
\begin{align*}
R^{(s-\frac{1}{2})}=\frac{\|v\|_{L_x^2 H_y^{s} \left(\mathbb{R}^2\right)}}{\|v\|_{L_x^2 H_y^{\frac{1}{2}} \left(\mathbb{R}^2\right)}}.
\end{align*}
\end{proof}
\begin{remark}
\label{remark 2.6}
We have also the Brezis–Gallouët type inequality on $\mathbb{R}_{x}\times\mathbb{T}_{y}$
\begin{equation}
\label{2.31}
\|v_{N}\|_{\ell_{N}^1 L_x^2 \left(\mathbb{R}\right)} \leq C_{s}\|v\|_{L_{x}^{2} H_{y}^{\frac{1}{2}}\left(\mathbb{R}\times\mathbb{T}\right)}\left[\log \left(1+\frac{\|v\|_{L_{x}^{2} H_y^{s}(\mathbb{R}\times\mathbb{T})}}{\|v\|_{L_{x}^{2} H_y^{\frac{1}{2}}(\mathbb{R}\times\mathbb{T})}}\right)\right]^{\frac{1}{2}}.
\end{equation}
The proof of (\ref{2.31}) is similar to the proof of (\ref{2.3}).
\end{remark}
\begin{remark}
In fact, we have the following 2D Brezis-Gallouët inequality: \\
Given $s>1$, there exists $C_s > 0$ such that
\begin{equation}
\label{2.41}
\|\hat{v}_{\eta}(\xi)\|_{L_{\xi,\eta}^1\left(\mathbb{R}^{2}\right)} \leq C_{s}\|v\|_{H^{1}\left(\mathbb{R}^{2}\right)}\left[\log \left(1+\frac{\|v\|_{H^{s}\left(\mathbb{R}^{2}\right)}}{\|v\|_{H^{1}\left(\mathbb{R}^{2}\right)}}\right)\right]^{\frac{1}{2}}, \quad \forall v \in H^{s}\left(\mathbb{R}^{2}\right).
\end{equation}
The proof of (\ref{2.41}) is similar to the proof of (\ref{2.3}), and (\ref{2.41}) will be used in the proof of Corollary \ref{corollary 3.2}.
\end{remark}
\subsection{Trudinger type estimate}
In \cite{ref4}, P. Gérard and S. Grellier adapted the approach in \cite{ref14} to deduce the Trudinger type estimate (\ref{2.7}) for $H^{\frac{1}{2}}(\mathbb{T})$, and we use the same method to deduce the following Trudinger type estimate for $H^{\frac{1}{2}}(\mathbb{R})$. The Trudinger type estimate for $H^{\frac{1}{2}}(\mathbb{R})$ will be used in the proof of the uniqueness of weak solution to (\ref{1.1}) in Theorem \ref{theorem 3.1}.
\begin{lemma}
\label{lemma 2.4}
For $u \in H^{\frac{1}{2}}(\mathbb{R})$, we have
\begin{equation}
\label{2.4}
\|u\|_{L^{k}(\mathbb{R})} \leq C \sqrt{k} \|u\|_{H^{\frac{1}{2}}(\mathbb{R})}, \quad \forall k \in (2,\infty).
\end{equation}
\end{lemma}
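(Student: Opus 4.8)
The plan is to prove the estimate by a dyadic (Littlewood--Paley) decomposition in the one-dimensional variable combined with Bernstein's inequality, keeping explicit track of how the constants depend on $k$; the sharp $\sqrt{k}$ growth then falls out of a single geometric sum. This is the dyadic mechanism behind the critical embedding $H^{\frac12}(\mathbb{R})\hookrightarrow \bigcap_{2\le k<\infty}L^{k}(\mathbb{R})$ with controlled constants, and it runs in close parallel to the frequency-splitting argument already used just above for the Brezis--Gallou\"et inequality \eqref{2.3}.

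First I would fix an inhomogeneous decomposition $u=S_0u+\sum_{j\ge 1}\Delta_j u$, where $\widehat{\Delta_j u}$ is supported in $\{|\eta|\sim 2^{j}\}$ and $\widehat{S_0 u}$ in $\{|\eta|\lesssim 1\}$. The only analytic inputs are Bernstein's inequality in dimension one, $\|\Delta_j u\|_{L^{k}(\mathbb{R})}\le C\,2^{j(\frac12-\frac1k)}\|\Delta_j u\|_{L^{2}(\mathbb{R})}$ for $k\ge 2$ (with the same constant for $S_0$, taking $j=0$), and the Plancherel identity $\sum_{j\ge 1}2^{j}\|\Delta_j u\|_{L^{2}}^{2}\le C\|u\|_{\dot H^{\frac12}}^{2}$. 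Setting $a_j:=2^{j/2}\|\Delta_j u\|_{L^{2}}$, Bernstein rewrites as $\|\Delta_j u\|_{L^{k}}\le C\,2^{-j/k}a_j$, so that by Minkowski's inequality and Cauchy--Schwarz
\[
\sum_{j\ge 1}\|\Delta_j u\|_{L^{k}}\le C\Big(\sum_{j\ge 1}2^{-2j/k}\Big)^{1/2}\Big(\sum_{j\ge 1}a_j^{2}\Big)^{1/2}\le C\Big(\frac{1}{1-2^{-2/k}}\Big)^{1/2}\|u\|_{\dot H^{\frac12}(\mathbb{R})}.
\]

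For the low frequencies, interpolation together with the bounded frequency support gives $\|S_0 u\|_{L^{k}}\le \|S_0 u\|_{L^{2}}^{2/k}\|S_0 u\|_{L^{\infty}}^{1-2/k}\le C\|S_0u\|_{L^{2}}\le C\|u\|_{L^{2}}$, with $C$ uniform in $k$, since $\|S_0 u\|_{L^{\infty}}\le C\|S_0 u\|_{L^{2}}$ by Bernstein. It then remains only to invoke the elementary bound $\frac{1}{1-2^{-2/k}}\le C\,k$ for all $k>2$, which comes from the asymptotics $1-2^{-2/k}\sim (2\log 2)/k$; combining the two pieces and crudely absorbing $\|u\|_{L^{2}}\le \|u\|_{H^{\frac12}}\le\sqrt{k}\,\|u\|_{H^{\frac12}}$ (valid since $k>2$) yields exactly $\|u\|_{L^{k}(\mathbb{R})}\le C\sqrt{k}\,\|u\|_{H^{\frac12}(\mathbb{R})}$.

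The genuinely routine parts are Bernstein and Plancherel; the one place demanding care --- and the conceptual heart of the statement --- is the summation of $\sum_{j\ge 1}2^{-2j/k}$ and its behavior as $k\to\infty$, since it is this single logarithmically degenerate factor that converts the failure of the endpoint embedding $H^{\frac12}\not\hookrightarrow L^{\infty}$ into the precise $\sqrt{k}$ blow-up of the $L^{k}$ constants. I do not expect any real obstacle on $\mathbb{R}$: the argument is the exact analogue of the Gérard--Grellier computation on $\mathbb{T}$, with the discrete low-frequency sum there replaced here by the single projection $S_0$.
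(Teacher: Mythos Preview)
Your argument is correct, and it reaches the conclusion by a genuinely different route from the paper's own proof. The paper does not use a Littlewood--Paley decomposition; instead it runs a Trudinger-type distribution-function argument \`a la Chemin--Xu and G\'erard--Grellier: one normalizes $\|u\|_{H^{1/2}}=1$, writes $\|u\|_{L^k}^k=k\int_0^\infty \lambda^{k-1}|\{|u|>\lambda\}|\,d\lambda$, and for each level $\lambda$ splits $u=\mathbf{1}_{|D|<t(\lambda)}u+\mathbf{1}_{|D|\ge t(\lambda)}u$ with a $\lambda$-dependent threshold chosen so that the low-frequency piece has $L^\infty$ norm at most $\lambda/2$; Markov on the high-frequency piece then controls the superlevel set, and after a Fubini the $\sqrt{k}$ emerges from the elementary bound $[\log(2+|\xi|)]^{\ell}\lesssim \ell^{\ell}\langle\xi\rangle$.

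The trade-off is this: your dyadic argument is shorter and more transparent for the specific statement at hand---the $\sqrt{k}$ falls out of a single geometric sum, and the only thing to check carefully (which you do) is that the Bernstein constant is uniform in $k$. The paper's layer-cake approach is a bit heavier here but is the natural mechanism if one wants to upgrade to genuine exponential integrability (Trudinger--Moser), and it mirrors the method the authors cite from \cite{ref4} on $\mathbb{T}$. Either approach is perfectly adequate for how the lemma is used downstream, namely the Yudovich-type uniqueness argument in the energy space, where only the polynomial growth rate $\sqrt{k}$ matters.
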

\begin{proof}
Without any loss of generality, we can assume that
\begin{align*}
\|u\|_{H^{\frac{1}{2}}(\mathbb{R})} = 1.
\end{align*}
Let 
\begin{align*}
u = \mathbf{1}_{|D|<t} u+\mathbf{1}_{|D| \geq t} u
\end{align*}
for a well chosen positive number $t$. Indeed,
\begin{align*}\left\|\mathbf{1}_{|D|<t} u\right\|_{L^{\infty}} & \leq \int_{|\xi|<t}|\hat{u}(\xi)| d \xi \\ &  \leq  \left(\int_{|\xi|<t} \langle\xi\rangle^{-1} d\xi\right)^{\frac{1}{2}}\left(\int_{|\xi|<t} \langle\xi\rangle|\hat{u}(\xi)|^{2} d \xi\right)^{\frac{1}{2}} \\ & \leq C_1 [\log(1+t)]^{\frac{1}{2}}. 
\end{align*}
We deine
\begin{align*}
t(\lambda) = \mathrm{e}^{\frac{\lambda^2}{4C_1^2}}-1,
\end{align*}
so that
\begin{align*}
\left\|\mathbf{1}_{|D|<t(\lambda)} u\right\|_{L^{\infty}} \leq \frac{\lambda}{2}.
\end{align*}
Since 
\begin{align*}
\{|u|>\lambda\} \subset\left\{\left|\mathbf{1}_{|D|<t} u\right|>\frac{\lambda}{2}\right\} \cup\left\{\left|\mathbf{1}_{|D| \geq t} u\right|>\frac{\lambda}{2}\right\},
\end{align*}
we conclude that
\begin{align*}
|\{|u|>\lambda\}| \leq\left|\left\{\left|\mathbf{1}_{|D| \geq t(\lambda)} u\right|>\frac{\lambda}{2}\right\}\right|.
\end{align*}
By the Markov inequality,
\begin{align*}
\left|\left\{\left|\mathbf{1}_{|D| \geq t(\lambda)} u\right|>\frac{\lambda}{2}\right\}\right| & \leq \frac{4}{\lambda^{2}} \int_{\mathbb{R}}\left|\mathbf{1}_{|D| \geq t(\lambda)} u\right|^{2} d x \\ & \leq \frac{4}{ \lambda^{2}} \int_{|\xi|>t(\lambda)}|\hat{u}(\xi)|^{2} d \xi
\end{align*}
where we have used the Plancherel formula. Then we have
\begin{align*}
\|u\|_{L^{k}}^{k} & = k \int_{0}^{\infty} \lambda^{k-1}|\{|u|>\lambda\}| d \lambda \\& \leq 4k \int_{0}^{\infty} \lambda^{k-3} \left(\int_{|\xi|>t(\lambda)}|\hat{u}(\xi)|^{2} d \xi\right) d \lambda \\ & \leq 4k \int_{\mathbb{R}}|\hat{u}(\xi)|^{2}\left(\int_{t(\lambda)<|\xi|} \lambda^{k-3} d \lambda\right) d \xi \\ & \leq \frac{4k}{k-2} (2C_1)^{k-2} \int_{\mathbb{R}} [\log(2+|\xi|)]^{\frac{k-2}{2}} |\hat{u}(\xi)|^{2} d\xi.
\end{align*}
In fact, with $\ell \in \mathbb{Z}_{>0}$, we have
\begin{align*}
[\log(2+|\xi|)]^{\ell} \lesssim \ell!(2+|\xi|) \lesssim \ell^\ell \langle\xi\rangle.
\end{align*}
It gives the expected constant proportional to $k$ in (\ref{2.4}).
\end{proof}
\begin{remark}
\label{remark 2.8}
The Trudinger type estimate for $H^{\frac{1}{2}}(\mathbb{T})$ reads as follows
\begin{equation}
\label{2.7}
\|u\|_{L^k(\mathbb{T})} \leq C \sqrt{k} \|u\|_{H^{\frac{1}{2}}(\mathbb{T})},
\end{equation}
and this estimate will be adapted in the proof of the uniqueness of weak solution to (\ref{1.01}).
\end{remark}
\section{Global well-posedness}
In this section, we study the Cauchy problem of (\ref{1.1}) and show the global well-posedness result of (\ref{1.1}). By a similar approach, we also infer the global well-posedness result of (\ref{1.01}). \\\\
Firstly, we prove the following theorem which implies the global well-posedness of (\ref{1.1}) in $\mathcal{H}^s$ with $\frac{1}{2}\leq s \leq 1$.
\begin{theorem}
\label{theorem 3.1}
Let $\frac{1}{2}\leq s \leq 1$. Given $u_0 \in \mathcal{H}^s : = L_x^2 H_y^s(\mathbb{R}^2) \cap H_x^1 L_y^2(\mathbb{R}^2)$, then there exists a unique global solution $u \in C\left(\mathbb{R} ; \mathcal{H}^s\right)$ to $(\ref{1.1})$ with $u(0) = u_0$. Moreover, for every $T>0$, the flow map $u_0 \in \mathcal{H}^s \mapsto u \in C\left([-T, T], \mathcal{H}^s\right)$ is continuous.
\end{theorem}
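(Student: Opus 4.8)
The plan is to split the regularity range into two regimes that require genuinely different techniques: the strictly subcritical range $\frac12<s\le1$, where the problem can be solved by a Strichartz fixed point together with the Brezis--Gallou\"et inequality, and the critical endpoint $s=\frac12$ (the energy space), where both the algebra property of $H_y^s$ and the logarithmic inequality degenerate and one must instead construct weak solutions by compactness and prove uniqueness by a Yudovich-type argument resting on the Trudinger estimate of Lemma \ref{lemma 2.4}.

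For $\frac12<s\le1$ I would first establish local well-posedness in $\mathcal H^s$ by a contraction argument for the Duhamel map
\[
u\mapsto e^{it(\partial_x^2-|D_y|)}u_0\mp i\int_0^t e^{i(t-\tau)(\partial_x^2-|D_y|)}\big(|u|^{p-1}u\big)(\tau)\,d\tau
\]
in the Strichartz space provided by Lemma \ref{lemma 2.1}. Two components of the $\mathcal H^s$ norm must be controlled. For the $L_x^2H_y^s$ part I would use that $H_y^s(\mathbb R)$ is an algebra for $s>\frac12$ and the fractional chain rule $\||u|^{p-1}u\|_{H_y^s}\lesssim\|u\|_{L_y^\infty}^{p-1}\|u\|_{H_y^s}$; for the $H_x^1L_y^2$ part, since $\partial_x$ commutes with the linear flow, I would apply Lemma \ref{lemma 2.1} with $s=0$ to $\partial_x u$ and use the pointwise bound $|\partial_x(|u|^{p-1}u)|\lesssim|u|^{p-1}|\partial_x u|$. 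Because $|z|^{p-1}z$ is only $C^1$ with H\"older derivative when $1<p<2$, I would run the contraction (and hence uniqueness and continuous dependence) in the weaker norm $L_x^2L_y^2$ while merely propagating the $\mathcal H^s$ bound, in the spirit of Kato, so as never to differentiate the nonlinearity twice.

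To globalize I would combine two ingredients. First, mass and energy conservation give a uniform-in-time bound on $\|u(t)\|_{\mathcal H^{1/2}}$ exactly as in Remark \ref{remark 1.21}; this is immediate for the defocusing sign and holds for the focusing sign because $p\le2<\frac73$ makes the problem energy-subcritical. Second, the Brezis--Gallou\"et inequality (\ref{2.3}), together with $\|v\|_{L_x^2L_y^\infty}\le\|v_\eta\|_{L_\eta^1L_x^2}$ (Fourier inversion in $y$ and Minkowski), controls the dangerous $L_y^\infty$ factor by $\|u\|_{\mathcal H^{1/2}}\big[\log(1+\|u\|_{\mathcal H^s}/\|u\|_{\mathcal H^{1/2}})\big]^{1/2}$. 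Inserting this into the Strichartz estimate over short intervals and running a Gronwall argument yields, schematically, an Osgood-type control
\[
\frac{d}{dt}\|u(t)\|_{\mathcal H^s}\lesssim \|u(t)\|_{\mathcal H^{1/2}}^{p-1}\big[\log\big(1+\|u(t)\|_{\mathcal H^s}\big)\big]^{\frac{p-1}{2}}\|u(t)\|_{\mathcal H^s},
\]
whose right-hand side grows more slowly than any power; hence $\|u(t)\|_{\mathcal H^s}$ stays finite on every bounded interval and the local solution extends globally, with flow-map continuity inherited from the Lipschitz bounds of the fixed point.

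For the endpoint $s=\frac12$ I would construct a weak solution by approximating $u_0\in\mathcal H^{1/2}$ by data $u_{0,n}\in\mathcal H^s$ with $s>\frac12$, solving by the previous step, using the uniform $\mathcal H^{1/2}$ bound from conservation laws to extract a weak-$*$ limit $u\in L^\infty_t\mathcal H^{1/2}$, and passing to the limit in the nonlinearity via local (Aubin--Lions) compactness. The main obstacle is \textbf{uniqueness} of such weak solutions: for two solutions with the same data the difference $w=u-v$ satisfies, after pairing with $\bar w$ and using self-adjointness of $\partial_x^2-|D_y|$,
\[
\frac12\frac{d}{dt}\|w\|_{L^2}^2\lesssim\int_{\mathbb R^2}\big(|u|^{p-1}+|v|^{p-1}\big)|w|^2\,dx\,dy .
\]
Since $H_y^{1/2}$ is critical, this cannot be closed by H\"older and Sobolev alone; the Trudinger estimate $\|f\|_{L_y^k}\lesssim\sqrt k\,\|f\|_{H_y^{1/2}}$ applied in $y$, combined with the embedding $H_x^1\hookrightarrow L_x^\infty$ in $x$, interpolation of $w$ between $L^2_{x,y}$ and the energy norm, and optimization over the integrability exponent $k$, bounds the right-hand side by $\|w\|_{L^2}^2$ times a logarithmic modulus, producing an Osgood inequality that forces $w\equiv0$ because $w(0)=0$. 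Finally, strong continuity $u\in C(\mathbb R;\mathcal H^{1/2})$ and continuity of the flow map are recovered by upgrading weak continuity to strong continuity using conservation of mass and energy, since weak convergence together with convergence of norms gives strong convergence.
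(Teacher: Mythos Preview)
Your overall architecture is exactly that of the paper: split into $\tfrac12<s\le1$ (Strichartz contraction plus Brezis--Gallou\"et globalization) and $s=\tfrac12$ (weak solutions by compactness, Yudovich-type uniqueness via Trudinger, strong continuity from the conservation laws). The treatment of the first case is essentially correct and matches the paper.

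There is, however, a genuine gap in your uniqueness argument at $s=\tfrac12$. You propose to pair the equation for $w=u-v$ with $\bar w$ and bound $\int(|u|^{p-1}+|v|^{p-1})|w|^2$ using Trudinger in $y$ together with $H^1_x\hookrightarrow L^\infty_x$. The obstruction is anisotropic: the a~priori control on $u,v$ is only $L^2_xH^{1/2}_y\cap H^1_xL^2_y$, so the function $x\mapsto\|u(x,\cdot)\|_{H^{1/2}_y}$ lies merely in $L^2_x$, while the $H^1_x$ regularity is paired with $L^2_y$ and gives no pointwise-in-$x$ control of $H^{1/2}_y$. After Trudinger in $y$ the remaining H\"older in $x$ has total exponent weight $p+1>2$, and one is forced to spend $\mathcal H^{1/2}$ norms of $w$ itself; the power of $\|w\|_{L^2}$ then stays bounded away from $2$ and the Yudovich mechanism does not close. (Equivalently, $\mathcal H^{1/2}$ does \emph{not} embed into $L^m_{x,y}$ for $m>6$, so there is no two-dimensional Trudinger inequality in the energy space to rely on.) A secondary issue is that the differential identity $\tfrac{d}{dt}\|w\|_{L^2}^2=\ldots$ is not obviously justified for weak solutions with only $\mathcal H^{1/2}$ regularity.

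The paper avoids both problems by \emph{not} using the energy identity: it applies the Strichartz estimate (Lemma~\ref{lemma 2.1} with $s=0$) to the difference, which places the nonlinearity in $L^{q'}_tL^{r'}_xL^2_y$ with $r'=\tfrac{2}{p}<2$. After the pointwise trick $|w|\le(|u_1|+|u_2|)^{1/k}|w|^{1-1/k}$ and H\"older in $y$, the factors $\|u_j\|_{L_y^{2k(p-1)+2}}^{\,p-1+1/k}$ and $\|w\|_{L^2_y}^{\,1-1/k}$ have total weight exactly $p=\tfrac{2}{r'}$, so H\"older in $x$ lands each base in $L^2_x$. Trudinger in $y$ then contributes a factor $k^{(p-1)/2}$, yielding $g'(\tau)\le Ck^{\gamma}\tau^{1/(kq')}g(\tau)^{1-1/k}$ with $\gamma<1$, hence $g(\tau)\le(Ck^{\gamma-1}\tau^{\cdots})^k\to0$ as $k\to\infty$. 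The Strichartz gain of integrability in $x$ is precisely what makes the Yudovich argument close; your energy-estimate version lacks this and cannot be completed as written.
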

\begin{proof}
We decompose the proof into two parts:\\
\textbf{Case 1} is the proof of the global well-posedness of (\ref{1.1}) in $\mathcal{H}^s$ with $\frac{1}{2}< s \leq 1$.\\
\textbf{Case 2} is the proof of the global well-posedness of (\ref{1.1}) in $\mathcal{H}^{\frac{1}{2}}$.\\\\
\textbf{Case 1: } $\frac{1}{2}< s \leq 1$. \\
As a first step, we shall prove the existence of the local solution to (\ref{1.1}) with $u_0 \in \mathcal{H}^s$ ($\frac{1}{2}< s \leq 1$). We set
\begin{align*}
 X_{T} & :=C\left([-T, T]; \mathcal{H}^s\right) , \\ \|u\|_{X_{T}} & :=\max _{t \in[-T, T]}\|u(t)\|_{\mathcal{H}^s}.
\end{align*}
We also define
\begin{equation}
\Phi_{p}(u)(t):=\mathrm{e}^{i t (\partial_x^2-|D_y|)} u_{0} \pm i \int_{0}^{t} \mathrm{e}^{i\left(t-t^{\prime}\right) (\partial_x^2-|D_y|)}\left(\left|u\left(t^{\prime}\right)\right|^{p-1}u\left(t^{\prime}\right)\right) d t^{\prime}.
\end{equation}
From Lemma \ref{lemma 2.1} and Lemma \ref{lemma a1}, we have 
\begin{align*}
\|\Phi_{p}(u)\|_{X_{T}} & \leq C\left\|u_{0}\right\|_{\mathcal{H}^s}+C\left\|\left|u\right|^{p-1} u\right\|_{{L_t^{q'}\left([-T, T];   L_x^{r'} H_y^s\left(\mathbb{R}^{2}\right)\right)}}+ C\left\||u|^{p-1} \partial_x u\right\|_{{L_t^{q'}\left([-T, T];   L_x^{r'} L_y^2\left(\mathbb{R}^{2}\right)\right)}}   \\ & \leq C\left\|u_{0}\right\|_{\mathcal{H}^s}+C T^{\frac{1}{q'}}\|u\|_{X_{T}}^{p},
\end{align*}
where 
\begin{align*}
\frac{1}{q'} = 1-\frac{1}{q} = \frac{5-p}{4}, \quad \frac{1}{r'}= 1-\frac{1}{r} = \frac{p}{2}.
\end{align*}
Similarly, we have
\begin{align*}
\|\Phi_{p}(u)-\Phi_{p}(\tilde{u})\|_{X_{T}} \leq K T^{\frac{1}{q'}} \max \left(\|u\|_{X_{T}}^{p-1},\|\tilde{u}\|_{X_{T}}^{p-1}\right)\|u-\tilde{u}\|_{X_{T}}.
\end{align*}
Consequently, if $R>0$ is such that
\begin{align*}
R>2 C\left\|u_{0}\right\|_{\mathcal{H}^s}, \max (C, K) T^{\frac{1}{q'}} R^{p-1} \leq \frac{1}{2},
\end{align*}
then $\Phi_{p}$ is a contraction map on the closed ball $B_R$ of radius $R$ and centered at 0 in $X_T$, and therefore has a unique fixed point in $B_R$. Thus there exist $T>0$ and a local solution $u \in C\left([-T, T] ; \mathcal{H}^s\right)$ to (\ref{1.1}) with $u(0) = u_0$.\\

Next, we study the global existence of this local solution. In fact, by the energy conservation, $\|u(t)\|_{H_x^1L_y^2 (\mathbb{R}^2)}$ is uniformly bounded in $t$. Then from the above derivation, we have the blow up criterion at $T^{*}>0$
\begin{equation}
\label{3.31}
\|u(\tau)\|_{L_x^2 H_y^s (\mathbb{R}^2)} \xrightarrow{ \tau \to T^{*}} \infty.
\end{equation}
Again by Lemma \ref{lemma 2.1}, we have
\begin{align*}
& \|u(\tau)\|_{L_x^2 H_y^s\left(\mathbb{R}^{2}\right)} \\ \leq &\, C \|u_0\|_{L_x^2 H_y^s(\mathbb{R}^{2})} + C \left\|\left|u\right|^{p-1} u\right\|_{{L_t^{q'}\left((0, \tau);   L_x^{r'} H_y^s\left(\mathbb{R}^{2}\right)\right)}} \\ \leq & \, C + C \left\|\|u\|_{L_{y}^{\infty}(\mathbb{R})}^{p-1} \|u\|_{H_y^s(\mathbb{R})}\right\|_{{L_t^{q'}\left((0, \tau);   L_x^{r'} \left(\mathbb{R}\right)\right)}} \\ \leq & \,  C  + C \left\|\|u_{\eta}\|_{L_{\eta}^1(\mathbb{R})}^{p-1} \|u\|_{H_y^s(\mathbb{R})}\right\|_{{L_t^{q'}\left((0, \tau);   L_x^{r'} \left(\mathbb{R}\right)\right)}} \\ \leq & \, C +  C\left\| \|u_{\eta}\|_{ L_{x}^{2} L_{\eta}^1(\mathbb{R}^2)}^{p-1} \|u\|_{L_x^2 H_y^s(\mathbb{R}^2)}\right\|_{L_{t}^{q'}(0,\tau)}\\ \leq & \, C  + C\left\| \|u_{\eta}\|_{L_{\eta}^1 L_{x}^{2}(\mathbb{R}^2)}^{p-1} \|u\|_{L_x^2 H_y^s(\mathbb{R}^2)}\right\|_{L_{t}^{q'}(0,\tau)}.
\end{align*}
The second inequality above comes from Lemma \ref{lemma a1}, and the last inequality above comes from the Minkowski inequality. \\\\
We recall the Brezis–Gallouët type inequality (\ref{2.3}),
\begin{align*}
\|u_{\eta}\|_{L_{\eta}^1 L_x^2 \left(\mathbb{R}^2\right)} \leq C\|u\|_{L_{x}^{2} H_{y}^{\frac{1}{2}}\left(\mathbb{R}^2\right)}\left[\log \left(1+\frac{\|u\|_{L_{x}^{2} H_y^{s}(\mathbb{R}^2)}}{\|u\|_{L_{x}^{2} H_y^{\frac{1}{2}}(\mathbb{R}^2)}}\right)\right]^{\frac{1}{2}}.
\end{align*}
Set $N(\tau): = \|u(\tau)\|_{L_x^2 H_y^s (\mathbb{R}^2)}^{q'}$. We plug (\ref{2.3})  into the above estimate, since $\|u(t)\|_{L_x^2 H_y^{\frac{1}{2}}(\mathbb{R}^2)}$ is uniformly bounded in $t$, we have
\begin{align*}
N(t) & \leq F(t) : =  C + C \int_{0}^{t} N(t') \left[\log \left(2+N(t')\right)\right]^{\frac{q'(p-1)}{2}} dt'.
\end{align*}
This nonlinear Gronwall inequality can be solved classically in the new unknown $F$ by 
\begin{align*}
F^{\prime}(t)=C N(t) [\log (2+N(t))]^{\frac{q'(p-1)}{2}} \leq C(2+F(t)) [\log (2+F(t))]^{\frac{q'(p-1)}{2}},
\end{align*}
so that
\begin{align*}
\frac{d}{d t} [\log (2+F(t))]^{1-\frac{q'}{2}(p-1)} \leq C.
\end{align*}
Integrating from 0 to $\tau$, we conclude
\begin{align*}
[\log (2+N(\tau))]^{1-\frac{q'}{2}(p-1)} \leq [\log (2+F(\tau))]^{1-\frac{q'}{2}(p-1)} \leq  [\log \left(2+C\right)]^{1-\frac{q'}{2}(p-1)}+C \tau,
\end{align*}
which implies
\begin{align*}
N(\tau) \leq C \mathrm{e}^{C \tau^{\frac{5-p}{7-3p}}}.
\end{align*}
This shows that $N(\tau) $ remains bounded if $\tau$ remains bounded. From the blow up criterion (\ref{3.31}), we can deduce the global existence of the local solution. Furthermore, arguing as in the proof of the local problem, we can prove that two solutions in $C(\mathbb{R},\mathcal{H}^s)$ to (\ref{1.1}) which coincide at $t=0$ must coincide on $\mathbb{R}$, and this  yields the uniqueness of the global solution to (\ref{1.1}) in $\mathcal{H}^{s}$($\frac{1}{2}< s \leq 1$). Similarly, we can show that the flow map $u_0 \in \mathcal{H}^s \mapsto u \in C\left([-T, T], \mathcal{H}^s\right)$ is continuous for every $T>0$.  The proof of the global well-posedness of (\ref{1.1}) in $\mathcal{H}^{s}$($\frac{1}{2}< s \leq 1$) is complete.\\\\
\textbf{Case 2: } $s=\frac{1}{2}$.\\
Firstly, we prove the global existence of weak solutions in $ \mathcal{H}^{\frac{1}{2}}$. Let $u_0 \in \mathcal{H}^{\frac{1}{2}}$, and we approximate $u_0$ by a sequence $\left(u_{0}^{n}\right)$ in $\mathcal{H}^s$ with $\frac{1}{2}<s \leq 1$. We denote by $\left(u_{n}(t)\right)$ the solutions in $C(\mathbb{R}; \mathcal{H}^s)$ corresponding to the initial data $\left(u_{0}^{n}\right)$. By the energy conservation, the $\mathcal{H}^{\frac{1}{2}}$ norm of $u_n(t)$ remains bounded for any $t \in \mathbb{R}$, and thus $\partial_{t}u_n(t)$ remains bounded in $H_{x,y}^{-1}(\mathbb{R}^2)$, then we can deduce that there exists a subsequence of $u_n(t)$ converging weakly to $u(t)$ in $\mathcal{H}^{\frac{1}{2}}$, locally uniformly in $t$. For the convenience of notations, we still denote this subsequence by $\left(u_{n}(t)\right)$. By the Rellich theorem, $u_n(t)$ converges strongly to $u(t)$ in $L_{loc}^{\gamma}(\mathbb{R}^2)$ for every $2\leq \gamma < 6$, and it is easy to verify that $u$ is a weak solution of (\ref{1.1}).\\\\
Then we prove the uniqueness of the weak solution by using an approach first introduced by V. I. Yudovich \cite{ref13} in the study of the 2D Euler equation. Let $u_1(t)$ and $u_2(t)$ be two weak solutions to (\ref{1.1}) with $u_1(0) = u_2(0) = u_0$. Let $k>2$ and $\tau>0$, and we recall the following notations
\begin{align*}
\frac{1}{q'} = 1-\frac{1}{q} = \frac{5-p}{4}, \quad \frac{1}{r'}= 1-\frac{1}{r} = \frac{p}{2}.
\end{align*}
Then by Lemma \ref{lemma 2.1}, we have 
\begin{align*}
 & \quad \|u_1(\tau)-u_2(\tau)\|_{L_{x,y}^2} \\ & \leq C \left\||u_1|^{p-1}u_1 - |u_2|^{p-1}u_2 \right\|_{L_t^{q'}\left((0,\tau);L_x^{r'} L_y^2\right)} \\ & \leq C \left\|(|u_1|^{p-1}+|u_2|^{p-1})|u_1-u_2|\right\|_{L_t^{q'}\left((0,\tau);L_x^{r'} L_y^2\right)} \\ &  \leq C \left\|(|u_1|^{p-1+\frac{1}{k}}+|u_2|^{p-1+\frac{1}{k}})|u_1-u_2|^{1-\frac{1}{k}}\right\|_{L_t^{q'}\left((0,\tau);L_x^{r'} L_y^2\right)} \\ & \leq C \left\|(\|u_1\|_{L_y^{2k(p-1)+2}}^{p-1+\frac{1}{k}} + \|u_2\|_{L_y^{2k(p-1)+2}}^{p-1+\frac{1}{k}}) \|u_1-u_2\|_{L_y^2}^{1-\frac{1}{k}}\right\|_{L_t^{q'} \left((0,\tau); L_x^{r'}\right)} \\ & \leq C \left\|(\|u_1\|_{L_x^2 L_y^{2k(p-1)+2}}^{p-1+\frac{1}{k}} + \|u_2\|_{L_x^2 L_y^{2k(p-1)+2}}^{p-1+\frac{1}{k}}) \|u_1-u_2\|_{L_{x,y}^2}^{1-\frac{1}{k}}\right\|_{L_t^{q'}(0,\tau)} \\ &  \leq C \left(\|u_1\|_{ L_t^{\infty}((0,\tau); L_x^2 L_y^{2k(p-1)+2})}^{p-1+\frac{1}{k}}+\|u_2\|_{ L_t^{\infty}((0,\tau); L_x^2 L_y^{2k(p-1)+2})}^{p-1+\frac{1}{k}} \right)\left\| \|u_1-u_2\|_{L_{x,y}^2}^{1-\frac{1}{k}}\right\|_{L_t^{q'}(0,\tau)}\\ & \leq C k^{\frac{p-1}{2}} \, \tau^{\frac{1}{kq'}} \left(\|u_1\|_{ L_t^{\infty}((0,\tau); L_x^2 H_y^{\frac{1}{2}})}^{p-1+\frac{1}{k}}+\|u_2\|_{ L_t^{\infty}((0,\tau); L_x^2 H_y^{\frac{1}{2}})}^{p-1+\frac{1}{k}} \right) \left\|u_1-u_2\right\|_{L_t^{q'}\left((0,\tau); L_x^2 L_y^2\right)}^{1-\frac{1}{k}} \\ & \leq C k^{\frac{p-1}{2}} \tau^{\frac{1}{kq'}} \left\|u_1-u_2\right\|_{L_t^{q'}\left((0,\tau); L_x^2 L_y^2\right)}^{1-\frac{1}{k}}.
\end{align*}
The penultimate inequality above comes from Lemma \ref{lemma 2.4}.\\\\
Set 
\begin{align*}
g(\tau) : = \int_{0}^{\tau} \|u_1-u_2\|_{L_{x,y}^2}^{q'} dt,
\end{align*}
then the above inequalities imply
\begin{align*}
g^{\prime}(\tau) \leq C k^{\frac{2(p-1)}{5-p}} \tau^{\frac{5-p}{4k}} (g(\tau))^{1-\frac{1}{k}},
\end{align*}
thus we have
\begin{align*}
g(\tau) \leq (Ck^{\frac{3p-7}{5-p}} \tau^{\frac{5-p}{4k}+1})^{k}.
\end{align*}
The right hand side of the latter inequality goes to zero as $k$ goes to infinity for any $\tau>0$. This provides the uniqueness of the weak solution.\\\\
It remains to prove that the weak solution $u$ is strongly continuous in time with values in $\mathcal{H}^{\frac{1}{2}}$, and that it depends continuously on the initial data $u_0$.  First, by the mass conservation and the weak convergence of $u_n(t)$ in $L_{x,y}^2(\mathbb{R}^2)$, we can de deduce that
\begin{align*}
\|u(t)\|_{L_{x,y}^2(\mathbb{R}^2)} \leq \lim_{n\to\infty}  \|u_n(t)\|_{L_{x,y}^2(\mathbb{R}^2)} = \lim_{n\to\infty} \|u_0^n\|_{L_{x,y}^2(\mathbb{R}^2)} = \|u_0\|_{L_{x,y}^2(\mathbb{R}^2)}
\end{align*}
for any $t \in \mathbb{R}$. By reversing time and using uniqueness, we obtain the converse inequality for any $t \in \mathbb{R}$, thus we have
\begin{equation}
\label{3.4}
\|u(t)\|_{L_{x,y}^2(\mathbb{R}^2)} = \lim_{n\to\infty}  \|u_n(t)\|_{L_{x,y}^2(\mathbb{R}^2)} = \lim_{n\to\infty} \|u_0^n\|_{L_{x,y}^2(\mathbb{R}^2)} = \|u_0\|_{L_{x,y}^2(\mathbb{R}^2)}.
\end{equation}
From (\ref{3.4}) and the fact that $\|u_n(t)\|_{L_{x,y}^2(\mathbb{R}^2)} = \|u_0^n\|_{L_{x,y}^2(\mathbb{R}^2)}$, we can deduce the uniform convergence of $\|u_n(t)\|_{L_{x,y}^2(\mathbb{R}^2)}$
to $\|u(t)\|_{L_{x,y}^2(\mathbb{R}^2)}$. Since the weak convergence of $u_n(t)$ to $u(t)$ in $L_{x,y}^2(\mathbb{R}^2)$ is locally uniform in $t$, we infer that $u_n(t)$ converges strongly to $u(t)$ in $L_{x,y}^2(\mathbb{R}^2)$, locally uniformly in $t$.\\\\
Then we prove the strong continuity in time of $u$ in $\mathcal{H}^{\frac{1}{2}}$. Since $u_n(t)$ and $u(t)$ are uniformly bounded in $\mathcal{H}^{\frac{1}{2}}$, by the interpolation and the locally uniform convergence of $u_n(t)$ to $u(t)$ in $L_{x,y}^2(\mathbb{R}^2)$, we can deduce that $u_n(t)$ converges strongly to $u(t)$ in $L_{x,y}^{p+1}(\mathbb{R}^2)$, locally uniformly in $t$. Thus, by the weak convergence of $u_n(t)$ to $u(t)$ in $\mathcal{H}^{\frac{1}{2}}$, the strong convergence of $u_n(t)$ to $u(t)$ in $L_{x,y}^{p+1}(\mathbb{R}^2)$ and the energy conservation,  we have
\begin{align*}
\frac{1}{2}\|u(t)\|_{\mathcal{H}^{\frac{1}{2}}}^2 \pm \frac{1}{p+1}  \|u(t)\|_{L_{x,y}^{p+1}(\mathbb{R}^2)}^{p+1} & \leq \frac{1}{2} \liminf_{n\to\infty}\|u_n(t)\|_{\mathcal{H}^{\frac{1}{2}}}^2 \pm \frac{1}{p+1} \lim_{n\to\infty} \|u_n(t)\|_{L_{x,y}^{p+1}(\mathbb{R}^2)}^{p+1} \\ & \leq \lim_{n\to\infty}\left( \frac{1}{2}\|u_n(t)\|_{\mathcal{H}^{\frac{1}{2}}}^2 \pm \frac{1}{p+1}  \|u_n(t)\|_{L_{x,y}^{p+1}(\mathbb{R}^2)}^{p+1}\right) \\ & = \lim_{n\to\infty}\left( \frac{1}{2}\|u_0^n\|_{\mathcal{H}^{\frac{1}{2}}}^2 \pm \frac{1}{p+1}  \|u_0^n\|_{L_{x,y}^{p+1}(\mathbb{R}^2)}^{p+1}\right) \\ & =  \frac{1}{2}\|u_0\|_{\mathcal{H}^{\frac{1}{2}}}^2 \pm \frac{1}{p+1}  \|u_0\|_{L_{x,y}^{p+1}(\mathbb{R}^2)}^{p+1}
\end{align*}
for any $t \in \mathbb{R}$. By reversing time and using uniqueness, we obtain the converse inequality for any $t \in \mathbb{R}$, and then we have the following convergence
\begin{equation}
\label{3.5}
\frac{1}{2}\|u(t)\|_{\mathcal{H}^{\frac{1}{2}}}^2 \pm \frac{1}{p+1}  \|u(t)\|_{L_{x,y}^{p+1}}^{p+1}   = \lim_{n\to\infty}\left( \frac{1}{2}\|u_n(t)\|_{\mathcal{H}^{\frac{1}{2}}}^2 \pm \frac{1}{p+1}  \|u_n(t)\|_{L_{x,y}^{p+1}}^{p+1}\right).
\end{equation}
which is locally uniform in $t$. Combining (\ref{3.5}) and the locally uniform convergence of $u_n(t)$ to $u(t)$ in $L_{x,y}^{p+1}(\mathbb{R}^2)$, we can deduce that $u_n(t)$ converges strongly to $u(t)$ in $\mathcal{H}^{\frac{1}{2}}$ and the convergence is locally uniform in $t$.  This implies the strong continuity of $u(t)$ in $\mathcal{H}^{\frac{1}{2}}$. The continuity of the flow map can be proved similarly. The proof of the global well-posedness of (\ref{1.1}) in $\mathcal{H}^{\frac{1}{2}}$ is complete.
\end{proof}
Moreover, we infer the global well-posedness of the following quadratic half wave Schrödinger equations
\begin{equation}
\label{3.51}
\begin{aligned}
i \partial_{t}u + (\partial_x^2-|D_y|) u & =\pm |u| u, \quad(x, y) \in \mathbb{R} \times \mathbb{R},\\
u(0,x,y) & = u_0(x,y)
\end{aligned}
\end{equation}
in the higher order Sobolev space $H^2(\mathbb{R}^2)$, which can be deduced as a corollary of Theorem \ref{theorem 3.1}.
\begin{corollary}
\label{corollary 3.2}
Given $u_0 \in H^2(\mathbb{R}^2)$, there exists a unique global solution $u \in C(\mathbb{R};H^2(\mathbb{R}^2))$ to (\ref{3.51}) with $u(0) = u_0$. Also, for every $T>0$, the flow map $u_0 \in H^2(\mathbb{R}^2) \mapsto u \in C\left([-T, T], H^2(\mathbb{R}^2)\right)$ is continuous. 
\end{corollary}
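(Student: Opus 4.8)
The plan is to bootstrap the regularity of the solution already produced by Theorem~\ref{theorem 3.1}. First observe that
\begin{equation*}
\mathcal{H}^1 = L_x^2 H_y^1(\mathbb{R}^2)\cap H_x^1 L_y^2(\mathbb{R}^2) = H^1(\mathbb{R}^2),\qquad H^2(\mathbb{R}^2)\subset \mathcal{H}^1,
\end{equation*}
the first identity because $(1+\eta^2)+(1+\xi^2)\sim 1+\xi^2+\eta^2$ on the Fourier side. Hence, for $u_0\in H^2(\mathbb{R}^2)$, Theorem~\ref{theorem 3.1} (with $s=1$ and $p=2$) already furnishes a unique global solution $u\in C(\mathbb{R};H^1(\mathbb{R}^2))$ whose $H^1$-norm is finite on every bounded interval, say $\sup_{|t|\le T}\|u(t)\|_{H^1(\mathbb{R}^2)}=:M_T<\infty$. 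Since $H^2\subset\mathcal{H}^1$, uniqueness and continuous dependence in $H^2$ will be inherited from the $\mathcal{H}^1$-theory, so the whole task reduces to showing that this solution stays in $H^2$ and that its $H^2$-norm cannot blow up in finite time.

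I would first establish local well-posedness in $H^2(\mathbb{R}^2)$ by a contraction argument on $C([-T,T];H^2)$ parallel to Case~1 of Theorem~\ref{theorem 3.1}. Because the half-wave Schrödinger propagator commutes with $\partial_x$ and $|D_y|$, I would control the full $H^2$-norm through the equivalence $\|u\|_{H^2}^2\sim\|u\|_{L_x^2 H_y^2}^2+\|u\|_{H_x^2 L_y^2}^2$, estimating the first piece by Lemma~\ref{lemma 2.1} with $s=2$ and the second by commuting $\partial_x^2$ past the propagator and applying Lemma~\ref{lemma 2.1} with $s=0$. The nonlinear input is the tame estimate
\begin{equation*}
\big\||u|u\big\|_{H^2(\mathbb{R}^2)}\lesssim \|u\|_{L^\infty(\mathbb{R}^2)}\|u\|_{H^2(\mathbb{R}^2)}+\|\nabla u\|_{L^4(\mathbb{R}^2)}^2\lesssim \big(\|u\|_{L^\infty(\mathbb{R}^2)}+\|u\|_{H^1(\mathbb{R}^2)}\big)\|u\|_{H^2(\mathbb{R}^2)},
\end{equation*}
where the last step uses the two-dimensional Gagliardo--Nirenberg bound $\|\nabla u\|_{L^4}^2\lesssim\|u\|_{H^1}\|u\|_{H^2}$. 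Together with the time factor $T^{1/q'}$ from Hölder (recall $\tfrac{1}{q'}=\tfrac{5-p}{4}$, $\tfrac{1}{r'}=\tfrac{p}{2}$), this makes $\Phi_2$ a contraction on a small ball, yielding a local $H^2$-solution and the blow-up criterion $\|u(\tau)\|_{H^2}\to\infty$ as $\tau\to T^{*}$.

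The core of the argument is the global-in-time $H^2$ bound, which I would run exactly as in Case~1 but with the two-dimensional Brezis--Gallou\"{e}t inequality (\ref{2.41}) in place of (\ref{2.3}). Setting $N(\tau):=\|u(\tau)\|_{H^2(\mathbb{R}^2)}^{q'}$ and feeding the tame estimate into the Strichartz bound, the factor $\|u\|_{L^\infty}\le\|\mathcal{F}u\|_{L^1}$ is replaced, via (\ref{2.41}) and the finite bound $M_T$, by $\|u\|_{H^1}[\log(1+\|u\|_{H^2}/\|u\|_{H^1})]^{1/2}$, while the Gagliardo--Nirenberg term contributes only a linear-in-$N$ summand. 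This produces the same logarithmic Gronwall inequality $N(t)\le C+C\int_0^t N(t')[\log(2+N(t'))]^{q'(p-1)/2}\,dt'$ as in Case~1, and since for $p=2$ one has $1-\tfrac{q'}{2}(p-1)=\tfrac{1}{3}>0$, the identical ODE comparison gives $N(\tau)\le Ce^{C\tau^{3}}$, ruling out finite-time blow-up. Combined with the blow-up criterion this yields the global $H^2$-solution; uniqueness follows from Theorem~\ref{theorem 3.1}, and continuity of the flow map is obtained by the same arguments used there.

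The step I expect to be the main obstacle is the tame $H^2$ estimate for the nonlinearity: the map $u\mapsto|u|u$ is only $C^{1,1}$ (its second differential is bounded but discontinuous at $u=0$), so the chain rule $D^2(|u|u)=G'(u)D^2u+G''(u)(Du,Du)$ cannot be invoked verbatim and must be justified by approximation, with the genuinely second-order term controlled through $\|\nabla u\|_{L^4}^2$ rather than a pointwise $C^2$ bound. A secondary technical point is the bookkeeping needed to assemble the anisotropic Strichartz estimate of Lemma~\ref{lemma 2.1} into a clean full-$H^2$ bound, since the dispersion acts only in the $x$-variable.
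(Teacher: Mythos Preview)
Your proposal is correct and follows essentially the same route as the paper: local $H^2$ contraction, the tame estimate $\||u|u\|_{H^2}\lesssim\|u\|_{L^\infty}\|u\|_{H^2}$ (which the paper records as Lemma~\ref{lemma a2}, proved exactly via the Gagliardo--Nirenberg control of $\|\nabla u\|_{L^4}^2$ that you anticipate), and then the 2D Brezis--Gallou\"et inequality (\ref{2.41}) together with the $H^1$ bound from Theorem~\ref{theorem 3.1} to close a logarithmic Gronwall. The only difference is cosmetic: the paper bypasses the Strichartz exponents and uses the direct Duhamel bound $\|u(\tau)\|_{H^2}\le\|u_0\|_{H^2}+\int_0^\tau\||u|u\|_{H^2}\,dt$, so the isotropic $H^2$ tame estimate plugs in immediately and your second anticipated obstacle (the anisotropic bookkeeping) never appears.
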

\begin{proof}
Firstly, we shall prove the existence of the local solution in $H^2(\mathbb{R}^2)$ with $u(0) = u_0$. We set
\begin{align*}
 Y_{T} & :=C\left([-T, T]; H^2(\mathbb{R}^2)\right) , \\ \|u\|_{Y_{T}} & :=\max _{t \in[-T, T]}\|u(t)\|_{H^2(\mathbb{R}^2)}.
\end{align*}
We recall the definition
\begin{align*}
\Phi_{2}(u)(t):=\mathrm{e}^{i t (\partial_x^2-|D_y|)} u_{0} \pm i \int_{0}^{t} \mathrm{e}^{i\left(t-t^{\prime}\right) (\partial_x^2-|D_y|)}\left(\left|u\left(t^{\prime}\right)\right| u\left(t^{\prime}\right)\right) d t^{\prime}.
\end{align*}
We estimate directly the $Y_T$ norm of $\Phi_{2}(u)(t)$, combined with the Minkowski inequality and Lemma \ref{lemma a2}, we have
\begin{align*}
\|\Phi_{2}(u)\|_{Y_{T}} \leq \left\|u_{0}\right\|_{H^2(\mathbb{R}^2)}+ C T\|u\|_{Y_{T}}^{2}.
\end{align*}
Similarly, we have 
\begin{align*}
\|\Phi_{2}(u)-\Phi_{2}(\tilde{u})\|_{Y_{T}} \leq K T \max \left(\|u\|_{Y_{T}},\|\tilde{u}\|_{Y_{T}}\right)\|u-\tilde{u}\|_{Y_{T}}.
\end{align*}
Consequently, if $R>0$ is such that
\begin{align*}
R>2 \left\|u_{0}\right\|_{H^2(\mathbb{R}^2)}, \max (C, K) T R \leq \frac{1}{2},
\end{align*}
then $\Phi_2$ is a contraction map on the closed ball $B_R$ of radius $R$ and centered at 0 in $Y_T$, and therefore has a unique fixed point in $B_R$. Thus there exist $T>0$ and a local solution $u \in C\left([-T, T] ; H^2(\mathbb{R}^2)\right)$ to (\ref{3.51}) with $u(0) = u_0$.\\\\
Then we study the global existence of this local solution. From the above derivation, we have the following blow up criterion at $T^{*} > 0$
\begin{equation}
\label{3.6}
\|u(\tau)\|_{H^2(\mathbb{R}^2)}  \xrightarrow{ \tau \to T^{*}} \infty.
\end{equation}
From Theorem \ref{theorem 3.1}, we already have the global well-posedness of (\ref{3.51}) in $H^1(\mathbb{R}^2)$, so we have the boundedness with $t$ of $\|u(t)\|_{H^1(\mathbb{R}^2)}$ in any compact subset of $\mathbb{R}$. We may assume that $\tau>0$, then from the Duhamel's formula and Lemma \ref{lemma a2}, we have
\begin{align*}
\|u(\tau)\|_{H^{2}(\mathbb{R}^2)} & \leq\left\|u_{0}\right\|_{H^{2}(\mathbb{R}^2)}+ \int_{0}^{\tau}\left\|\left|u\left(t\right)\right| u\left(t\right)\right\|_{H^{2}(\mathbb{R}^2)} d t\\ & \leq \left\|u_{0}\right\|_{H^{2}(\mathbb{R}^2)}+ C \int_{0}^{\tau}\left\|\hat{u}_{\eta}(t)(\xi)\right\|_{L_{\xi,\eta}^1(\mathbb{R}^2)}\left\|u\left(t\right)\right\|_{H^{2}(\mathbb{R}^2)} d t.
\end{align*}
At this stage we plug the 2D Brezis-Gallouët inequality on $\mathbb{R}^2$ (\ref{2.41}), then we obtain
\begin{align*}
\|u(\tau)\|_{H^{2}(\mathbb{R}^2)} \leq\left\|u_{0}\right\|_{H^{2}(\mathbb{R}^2)}+ C \int_{0}^{\tau}\|u(t)\|_{H^{1}\left(\mathbb{R}^{2}\right)}\left[\log \left(1+\frac{\|u(t)\|_{H^{2}\left(\mathbb{R}^{2}\right)}}{\|u(t)\|_{H^{1}\left(\mathbb{R}^{2}\right)}}\right)\right]^{\frac{1}{2}}\left\|u\left(t\right)\right\|_{H^{2}(\mathbb{R}^2)} d t.
\end{align*}
Since $\|u(t)\|_{H^{1}(\mathbb{R}^2)}$ is bounded in $[0,\tau]$, by the argument used in the proof of Theorem \ref{theorem 3.1}, we infer that 
\begin{align*}
\|u(\tau)\|_{H^{2}(\mathbb{R}^2)} \leq C \mathrm{e}^{C_{\tau}}.
\end{align*}
Then by (\ref{3.6}), we can deduce the global existence of the solution.
The uniqueness of the global solution and the continuity of the flow map comes from the contraction argument. The proof is complete.
\end{proof}
\begin{remark}
In Theorem \ref{theorem 3.1}, $s$ cannot be extended to arbitrarily large numbers since we cannot expect high regularity on the nonlinear term $|u|^{p-1}u$ for $1<p\leq 2$. Similarly, $H^2(\mathbb{R}^2)$ cannot be improved to $H^3(\mathbb{R}^2)$ in Corollary \ref{corollary 3.2} by the same reason. See also Remark \ref{A.1} and Remark \ref{A.2} for details. Moreover, different from many other well-posedness results, our global well-posedness result implies the uniqueness of the solution in the space where the initial data is located, which is the result of unconditional uniqueness. 
\end{remark}
\begin{remark}
\label{remark 3.4}
The contraction argument used in the proof of Theorem \ref{theorem 3.1} allows us to prove that the flow map $u_{0} \mapsto u(t)$ is Lipschitz continuous on bounded subsets of $\mathcal{H}^s$($\frac{1}{2}<s\leq 1$).
\end{remark}
Then we focus on the Cauchy problem of (\ref{1.01}). In fact, Remark \ref{remark 2.3}, Remark \ref{remark 2.6} and Remark \ref{remark 2.8} allow us to adapt the approach in the proof of Theorem \ref{theorem 3.1} to deduce the global well-posedness of (\ref{1.01}) in $\mathcal{K}^s$ with $\frac{1}{2}\leq s \leq 1$. 
\begin{theorem}
\label{theorem 3.5}
Let $\frac{1}{2}\leq s \leq 1$. Given $u_0 \in \mathcal{K}^s : = L_x^2 H_y^s(\mathbb{R}\times\mathbb{T}) \cap H_x^1 L_y^2(\mathbb{R}\times\mathbb{T})$, then there exists a unique global solution $u \in C\left(\mathbb{R} ; \mathcal{K}^s\right)$ to $(\ref{1.01})$ with $u(0) = u_0$. Moreover, for every $T>0$, the flow map $u_0 \in \mathcal{K}^s \mapsto u \in C\left([-T, T], \mathcal{K}^s\right)$ is continuous.
\end{theorem}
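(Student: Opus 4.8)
The plan is to transcribe the two-case argument of Theorem \ref{theorem 3.1} to the cylinder $\mathbb{R}_x\times\mathbb{T}_y$, substituting at each step the analogues of the three main tools that have already been recorded for this geometry: the Strichartz estimate of Remark \ref{remark 2.3}, the Brezis--Gallou\"et type inequality (\ref{2.31}) of Remark \ref{remark 2.6}, and the Trudinger type estimate (\ref{2.7}) of Remark \ref{remark 2.8}. Accordingly I would split the argument into the case $\frac12<s\le1$ and the case $s=\frac12$, mirroring Case 1 and Case 2 of Theorem \ref{theorem 3.1}.

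In the case $\frac12<s\le1$, I would first run the contraction-mapping fixed point argument for $\Phi_p$ on $X_T:=C([-T,T];\mathcal{K}^s)$ exactly as before, using the $\mathbb{R}_x\times\mathbb{T}_y$ Strichartz estimate in place of Lemma \ref{lemma 2.1}. Since the admissible pairs and the bookkeeping exponents $\frac{1}{q'}=\frac{5-p}{4}$, $\frac{1}{r'}=\frac p2$ are dictated solely by the dispersion in the $x$ variable, this produces a local solution with the same blow-up criterion in $\|u(\tau)\|_{L_x^2H_y^s}$. The energy conservation (\ref{1.3}) on $\mathbb{R}_x\times\mathbb{T}_y$, together with the $\mathcal{K}^{1/2}$ uniform boundedness noted after the definition of $\mathcal{K}^s$, controls the energy-space norm; then I would feed the inequality (\ref{2.31})—now with the discrete sum $\|u_N\|_{\ell_N^1L_x^2}$ replacing the integral $\|u_\eta\|_{L_\eta^1L_x^2}$—into the Strichartz bound to obtain the same nonlinear Gronwall inequality for $N(\tau)=\|u(\tau)\|_{L_x^2H_y^s}^{q'}$, whence the bound $N(\tau)\le Ce^{C\tau^{(5-p)/(7-3p)}}$ and global existence. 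Uniqueness and continuity of the flow map follow from the same contraction estimate.

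In the case $s=\frac12$, I would construct weak solutions in $\mathcal{K}^{1/2}$ by approximating $u_0\in\mathcal{K}^{1/2}$ with data $u_0^n\in\mathcal{K}^s$ ($s>\frac12$), invoking the solutions $u_n$ from the first case, and extracting a subsequence converging weakly in $\mathcal{K}^{1/2}$, locally uniformly in $t$, using that energy conservation keeps $\|u_n(t)\|_{\mathcal{K}^{1/2}}$ bounded and hence $\partial_t u_n$ bounded in $H^{-1}_{x,y}(\mathbb{R}\times\mathbb{T})$. Uniqueness I would obtain by Yudovich's method as in Theorem \ref{theorem 3.1}, the only change being that the penultimate estimate now uses the torus Trudinger bound (\ref{2.7}) in place of (\ref{2.4}); the resulting inequality $g(\tau)\le(Ck^{\frac{3p-7}{5-p}}\tau^{\frac{5-p}{4k}+1})^k$ still vanishes as $k\to\infty$. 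Strong continuity in $\mathcal{K}^{1/2}$ and continuous dependence then follow verbatim from the mass and energy conservation and the lower-semicontinuity argument of (\ref{3.4})--(\ref{3.5}).

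The main obstacle I anticipate is not any single estimate—each has been supplied in the stated remarks—but verifying that the compactness and the anisotropic Sobolev embedding $\mathcal{K}^{1/2}\hookrightarrow L^6(\mathbb{R}\times\mathbb{T})$ used in the energy-space construction survive the passage from $\mathbb{R}^2$ to the cylinder. Concretely, one must confirm that the interpolation chain underlying $\mathcal{H}^{1/2}\subset L^6$ is insensitive to replacing the Fourier integral in $y$ by a series over $N\in\mathbb{Z}$, and that a Rellich-type compact embedding holds with only local control in $x$ (the $y$ direction being automatically local since $\mathbb{T}_y$ is compact), so that $u_n$ converges strongly in $L^\gamma_{\mathrm{loc}}$ for $2\le\gamma<6$ and the limit solves (\ref{1.01}) weakly. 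Granting these two points, every remaining step is a line-by-line translation of the $\mathbb{R}^2$ proof.
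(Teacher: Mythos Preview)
Your proposal is correct and follows exactly the route the paper takes: the paper does not give an independent proof of Theorem \ref{theorem 3.5} but simply states that Remarks \ref{remark 2.3}, \ref{remark 2.6} and \ref{remark 2.8} allow one to adapt the proof of Theorem \ref{theorem 3.1} verbatim, which is precisely the transcription you carry out. The two auxiliary points you flag (the embedding $\mathcal{K}^{1/2}\hookrightarrow L^6(\mathbb{R}\times\mathbb{T})$ and the Rellich-type compactness with $\mathbb{T}_y$ compact) are indeed needed and are taken for granted by the paper as well; they follow by the same anisotropic interpolation as on $\mathbb{R}^2$, so there is no genuine obstacle.
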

\begin{remark}
As we claim in Remark \ref{remark 3.4}, we also infer the Lipschitz continuity of the flow map $u_0 \mapsto u(t)$ on bounded subsets of $\mathcal{K}^s$($ \frac{1}{2}<s\leq 1 $) in Theorem \ref{theorem 3.5}.
\end{remark}
\section{Final comments}
We show briefly here some other results related to the well-posedness of nonlinear half wave Schrödinger equations on the plane.\\\\
We recall the nonlinear half wave Schrödinger equations on the plane
\begin{equation}
\label{5.1}
\begin{aligned}
i \partial_{t}u + (\partial_x^2-|D_y|) u & = \pm |u|^{p-1} u, \quad(x, y) \in \mathbb{R} \times \mathbb{R},\\
u(0,x,y) & = u_0(x,y).
\end{aligned}
\end{equation}
The equation (\ref{5.1}) is scale-invariant. If $u(t,x,y)$ is a solution to (\ref{5.1}), then
\begin{align*}
u_{\lambda}(t, x, y)=\lambda^{\frac{2}{p-1}} u\left(\lambda^{2} t, \lambda x, \lambda^{2} y\right)
\end{align*}
also satisfies (\ref{5.1}) for all $\lambda > 0$. We observe that
\begin{align*}
\left\|\left.u_{\lambda}\right|_{t=0}\right\|_{\dot{H}_{x}^{2 s_c} L_y^2 }=\left\|u_{0}\right\|_{\dot{H}_{x}^{2 s_c} L_y^2} ,\quad \left\|\left.u_{\lambda}\right|_{t=0}\right\|_{L_{x}^{2} \dot{H}_y^{s_c}}=\left\|u_{0}\right\|_{L_{x}^{2}\dot{H}_y^{s_c}} \text{ for all } \lambda >0
\end{align*}
with
\begin{align*}
s_{c}:=\frac{3}{4}-\frac{1}{p-1}.
\end{align*}
According to the results of nonlinear Schrödinger equations, we may expect some global well-posedness results of (\ref{5.1}) in the energy subcritical case or in the energy critical case. The case $1<p<\frac{7}{3}$ is called the mass subcritical case, and we notice that we have the $\mathcal{H}^{\frac{1}{2}}$ boundedness of solutions in $\mathcal{H}^{\frac{1}{2}}$ with respect to $t$ in the focusing case, so we might expect the global well-posedness results in both the focusing case and the defocusing case.  The case $1<p<5$ is called the energy subcritical case, but for $\frac{7}{3}\leq p<5$, we lose the $\mathcal{H}^{\frac{1}{2}}$ boundedness of solutions in $\mathcal{H}^{\frac{1}{2}}$ with respect to $t$ in the focusing case, so we might only expect to obtain the global well-posedness in the defocusing case for $\frac{7}{3}\leq p \leq 5$.  Moreover, the case $p = 5$ is called the energy critical case.  Since we already have the global well-posedness results in the case of $1<p\leq 2$, we discuss here the well-posedness results and the corresponding problems in the case of $2<p\leq 5$ as follows.\\\\
In fact, for $2<p\leq 5$, by using Lemma \ref{lemma 2.1}, we still have the local well-posedness of (\ref{5.1}) in $\mathcal{H}^{s}$  for some $s>\frac{1}{2}$, but we should add the space $L_{t}^4 \left((-T,T);L_x^\infty H_y^s (\mathbb{R}^2)\right)$ in the study of the Cauchy problem,  this means we do not have the unconditional uniqueness in this case. One can also see the proof of Theorem 1.6 in \cite{ref1} as a reference. However, there is no good way to prove the global well-posedness in any $\mathcal{H}^s$ yet. In fact, in the study of the global well-posedness in $\mathcal{H}^s$ with $s>\frac{1}{2}$, the Brezis–Gallouët argument is not sufficient to control the $L_{x,y}^{\infty}$ norm of solutions. Also, the argument in the proof of uniqueness of the weak solution to (\ref{1.1}) in $\mathcal{H}^{\frac{1}{2}}$ cannot be adapted directly in this case since we do not have $u \in L_t^4 L_x^{\infty} H_y^{\frac{1}{2}}$ for the weak solution $u$ in the energy space $\mathcal{H}^{\frac{1}{2}}$.
\begin{appendix}
\section{Appendix}
In the appendix, we introduce two fundamental lemmas which will be adapted in the proof of Theorem \ref{theorem 3.1} and Corollary \ref{corollary 3.2}.\\\\
The following lemma provides the $H^s(\mathbb{R})$($ 0<s\leq 1 $) estimate of the nonlinear term $|f|^{p-1}f$ with $p>1$.
\begin{lemma}
\label{lemma a1}
Let $p>1$ and $0 <s\leq 1$, then for $f \in H^s(\mathbb{R})$, we have
\begin{equation}
\label{A.1}
\left\||f|^{p-1}f\right\|_{H^s(\mathbb{R})} \leq C \left\|f\right\|_{L^{\infty}(\mathbb{R})}^{p-1} \left\|f\right\|_{H^s(\mathbb{R})}.
\end{equation}
\end{lemma}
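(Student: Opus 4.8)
The plan is to bound the two pieces of the $H^s$ norm separately, writing $\|g\|_{H^s(\mathbb{R})}^2 = \|g\|_{L^2(\mathbb{R})}^2 + \|g\|_{\dot H^s(\mathbb{R})}^2$ for $g = |f|^{p-1}f$. The $L^2$ part is immediate: since $\bigl||f|^{p-1}f\bigr| = |f|^p \le \|f\|_{L^\infty}^{p-1}|f|$ pointwise, one gets $\||f|^{p-1}f\|_{L^2} \le \|f\|_{L^\infty}^{p-1}\|f\|_{L^2}$. The whole difficulty therefore lies in the homogeneous seminorm $\|g\|_{\dot H^s}$, for which I would use the Gagliardo characterization $\|g\|_{\dot H^s(\mathbb{R})}^2 \simeq \int_{\mathbb{R}}\int_{\mathbb{R}} |g(x)-g(y)|^2 |x-y|^{-1-2s}\,dx\,dy$ when $0<s<1$, and the identity $\|g\|_{\dot H^1} = \|\partial_x g\|_{L^2}$ when $s=1$.

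The key ingredient is a pointwise estimate on $G(z) := |z|^{p-1}z$, viewed as a map $\mathbb{C} \cong \mathbb{R}^2 \to \mathbb{R}^2$. Because $p>1$, $G$ is of class $C^1$ with $G(0)=0$, and its differential obeys $|DG(z)| \le C|z|^{p-1}$, the right-hand side tending to $0$ as $z\to 0$ so that $DG$ is continuous at the origin; this is exactly the point where $p>1$, rather than merely $p\ge 1$, is used. Integrating $DG$ along the segment $[z_2,z_1]$, whose points all have modulus at most $\max(|z_1|,|z_2|)$, then yields the Lipschitz-type bound $|G(z_1)-G(z_2)| \le C\max(|z_1|,|z_2|)^{p-1}|z_1-z_2|$.

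Combining the two ingredients finishes the proof. For $0<s<1$, I would set $z_1=f(x)$ and $z_2=f(y)$ in the pointwise bound, so that $|G(f(x))-G(f(y))| \le C\|f\|_{L^\infty}^{p-1}|f(x)-f(y)|$; substituting into the Gagliardo double integral and factoring out the constant gives $\|G(f)\|_{\dot H^s} \le C\|f\|_{L^\infty}^{p-1}\|f\|_{\dot H^s}$. For $s=1$ I would instead use the chain rule $\partial_x(G(f)) = DG(f)\,\partial_x f$, which is licit since $G\in C^1$, together with $|DG(f)| \le C|f|^{p-1} \le C\|f\|_{L^\infty}^{p-1}$, to obtain $\|\partial_x G(f)\|_{L^2} \le C\|f\|_{L^\infty}^{p-1}\|\partial_x f\|_{L^2}$. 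Adding the $L^2$ bound then produces (\ref{A.1}). I expect the main obstacle to be purely the limited regularity of $G$: since $p$ need not be an odd integer, $G$ is only $C^1$ and no Fourier-based fractional Leibniz rule of higher order is available, which is precisely why the argument is confined to $s\le 1$ and must rest on the $C^1$ bound and the Gagliardo/chain-rule characterizations. A minor additional care is needed because $f$ is complex-valued, forcing one to treat $G$ as a two-dimensional map and to verify the derivative bound at the origin.
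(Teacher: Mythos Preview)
Your proposal is correct and follows essentially the same route as the paper: the Gagliardo difference-quotient characterization for $0<s<1$ combined with the pointwise Lipschitz bound $|G(z_1)-G(z_2)|\le C\max(|z_1|,|z_2|)^{p-1}|z_1-z_2|$, and the chain rule for $s=1$. If anything, your write-up is slightly more careful than the paper's, since you separate the $L^2$ piece explicitly and spell out why $G(z)=|z|^{p-1}z$ is $C^1$ when $p>1$.
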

\begin{proof}
We decompose the proof into two parts, the first part is the proof of the case of $f \in H^s(\mathbb{R})$ with $0<s< 1$, the second part is the proof of the case of $f \in H^1(\mathbb{R})$.\\\\
1. For $0<s< 1$:\\
We introduce the following equivalent norm of $H^s(\mathbb{R})$($ 0 <s< 1 $)
\begin{align*}
\|f\|_{H^s(\mathbb{R})} : = \left(\int_{\mathbb{R}} \int_{\mathbb{R}} \frac{|f(x+h)-f(x)|^{2}}{|h|^{2 s+1}} d x d h\right)^{\frac{1}{2}}.
\end{align*}
We have
\begin{align*}
\left\||f|^{p-1}f\right\|_{H^s(\mathbb{R})}^2 & = \int_{\mathbb{R}} \int_{\mathbb{R}} \frac{\left||f(x+h)|^{p-1}f(x+h)-|f(x)|^{p-1}f(x)\right|^{2}}{|h|^{2 s+1}} d x d h \\ & \leq C \int_{\mathbb{R}} \int_{\mathbb{R}}\left(|f(x+h)|^{p-1} + |f(x)|^{p-1} \right)^2 \frac{|f(x+h)-f(x)|^{2}}{|h|^{2 s+1}} d x d h \\ & \leq C \left\|f\right\|_{L^{\infty}(\mathbb{R})}^{2(p-1)} \int_{\mathbb{R}} \int_{\mathbb{R}} \frac{|f(x+h)-f(x)|^{2}}{|h|^{2 s+1}} d x d h \\ & \leq C \left\|f\right\|_{L^{\infty}(\mathbb{R})}^{2(p-1)} \left\|f\right\|_{H^s(\mathbb{R})}^2,
\end{align*}
which implies (\ref{A.1}) with $0<s<1$.\\\\
2. For $s=1$:\\
By Leibniz rule, we observe that
\begin{align*}
\left(|f|^{p-1} f\right)^{\prime} = |f|^{p-1} f' + (|f|^{p-1})^{\prime} f = |f|^{p-1} f' + \frac{(p-1)}{2} \left(|f|^{p-1} f' + |f|^{p-3} f^2 \bar{f}^{\prime}\right),
\end{align*}
then we can easily deduce (\ref{A.1}) with $s=1$.
\end{proof}
\begin{remark}
\label{remark a2}
In fact, if $p$ is odd, we can also deduce the estimate as in (\ref{A.1}) for any $s>1$ since we have $|f|^2 = f \bar{f}$. But if $p$ is not odd, we cannot expect the estimate as in (\ref{A.1}) for every $s>1$ since the sigularity will appear in the high order derivative of $|f|^{p-1}$. 
\end{remark}
Then we introduce the following lemma which gives the control of the $H^2(\mathbb{R}^2)$ norm of the quadratic nonlinear term.
\begin{lemma}
\label{lemma a2}
For $g(x,y) \in H^2(\mathbb{R}^2)$, we have
\begin{equation}
\label{A.2}
\left\||g| g \right\|_{H^2(\mathbb{R}^2)} \leq C \left\|g\right\|_{L^{\infty}(\mathbb{R}^2)} \|g\|_{H^2(\mathbb{R}^2)}.
\end{equation}
\begin{proof}
In fact, it is enough to show 
\begin{equation}
\label{A.3}
\left\|\partial_x^2(|g| g )\right\|_{H^2(\mathbb{R}^2)} \leq C \left\|g\right\|_{L^{\infty}(\mathbb{R}^2)} \|g\|_{H^2(\mathbb{R}^2)}.
\end{equation}
By Leibniz rule, we have
\begin{align*}
 \partial_{x}^2 \left(|g|g\right)  = |g| \partial_{x}^2 g + 2 \, \partial_{x} g \, \partial_{x} |g| + g\partial_{x}^2 |g|
\end{align*}
and
\begin{align*}
\partial_{x}^2 |g| = & \, \frac{1}{2} \partial_x \left( |g|^{-1} \bar{g}\partial_x g + |g|^{-1} g \partial_x \bar{g} \right) \\  = &\, -\frac{1}{2}|g|^{-2} \partial_x |g| \left(  \bar{g}  \partial_x g + g   \partial_x \bar{g}\right) + |g|^{-1} |\partial_x g|^2 \\  & +  \frac{1}{2} |g|^{-1} \left(\bar{g} \partial_x^2 g +  g \partial_x^2 \bar{g}\right).
\end{align*}
Since we have $\left|\partial_x |g|\right| \leq \left|\partial_x g\right|$, in order to show (\ref{A.3}), we only have to prove 
\begin{align*}
\|\partial_x g \|_{L^4(\mathbb{R}^2)}^2 \leq  C \|g\|_{L^{\infty}(\mathbb{R}^2)} \|\partial_x^2 g\|_{L^{2}(\mathbb{R}^2)}.
\end{align*}
The above inequality comes from the Gagliardo–Nirenberg interpolation inequality. The proof is complete.
\end{proof}
\begin{remark}
As explained in Remark \ref{remark a2}, (\ref{A.2}) cannot hold for $H^s(\mathbb{R}^2)$ with $s\geq 3$ since the singularity appears in the third derivative of $|g|$.
\end{remark}
\end{lemma}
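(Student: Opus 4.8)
The plan is to reduce the full $H^2$ estimate to a pointwise bound on the second derivatives of $|g|g$ and then to a single Gagliardo--Nirenberg inequality. First I would dispose of the low-order contributions. The $L^2$ part is immediate, since $\bigl\||g|g\bigr\|_{L^2(\mathbb{R}^2)}^2=\|g\|_{L^4(\mathbb{R}^2)}^4\le\|g\|_{L^\infty(\mathbb{R}^2)}^2\|g\|_{L^2(\mathbb{R}^2)}^2$, and using the elementary bound $\bigl|\partial_x|g|\bigr|\le|\partial_x g|$ (and likewise in $y$) one gets $\bigl|\partial(|g|g)\bigr|\le 2|g|\,|\partial g|$, so the first derivatives are controlled by $\|g\|_{L^\infty}\|g\|_{H^1}$. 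For the top-order part I would pass to the Fourier side and observe that $(\xi^2+\eta^2)^2\le 2(\xi^4+\eta^4)$, whence $\|h\|_{\dot H^2(\mathbb{R}^2)}^2\le 2\bigl(\|\partial_x^2 h\|_{L^2}^2+\|\partial_y^2 h\|_{L^2}^2\bigr)$; in particular the mixed derivative $\partial_x\partial_y$ is automatically controlled by the two pure ones. Since the nonlinearity $|g|g$ is symmetric under the exchange $x\leftrightarrow y$, it therefore suffices to estimate $\|\partial_x^2(|g|g)\|_{L^2(\mathbb{R}^2)}$.

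Next I would expand $\partial_x^2(|g|g)$ by the Leibniz rule into $|g|\,\partial_x^2 g+2\,\partial_x g\,\partial_x|g|+g\,\partial_x^2|g|$ and insert the formula for $\partial_x^2|g|$ displayed in the statement. The point is that every negative power $|g|^{-1}$ or $|g|^{-2}$ produced by differentiating $|g|$ is compensated: each such factor is multiplied either by the single $g$ standing in front of $\partial_x^2|g|$ or by products $\bar g\,\partial_x g+g\,\partial_x\bar g$, whose modulus is at most $2|g|\,|\partial_x g|$, together with the bound $\bigl|\partial_x|g|\bigr|\le|\partial_x g|$. Collecting terms one arrives at the pointwise inequality
\begin{equation*}
\bigl|\partial_x^2(|g|g)\bigr|\le C\bigl(\|g\|_{L^\infty(\mathbb{R}^2)}\,|\partial_x^2 g|+|\partial_x g|^2\bigr),
\end{equation*}
so that in $L^2$ the whole estimate is reduced to controlling $\|\partial_x g\|_{L^4(\mathbb{R}^2)}^2$ by $\|g\|_{L^\infty}\|\partial_x^2 g\|_{L^2}$.

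This last inequality I would obtain by slicing in $y$ and applying the one-dimensional Gagliardo--Nirenberg inequality $\|u'\|_{L^4(\mathbb{R})}^2\le C\|u\|_{L^\infty(\mathbb{R})}\|u''\|_{L^2(\mathbb{R})}$ to $u=g(\cdot,y)$ for each fixed $y$, then integrating the resulting $\|\partial_x g(\cdot,y)\|_{L^4_x}^4\le C\|g\|_{L^\infty(\mathbb{R}^2)}^2\|\partial_x^2 g(\cdot,y)\|_{L^2_x}^2$ over $y\in\mathbb{R}$; this gives exactly $\|\partial_x g\|_{L^4(\mathbb{R}^2)}^2\le C\|g\|_{L^\infty(\mathbb{R}^2)}\|\partial_x^2 g\|_{L^2(\mathbb{R}^2)}$, and the three pieces assemble to the claimed bound (\ref{A.2}).

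The main obstacle is purely technical and concerns the singularity of $z\mapsto|z|z$ at the origin: since this map is only $C^1$, the derivative $\partial_x^2|g|$ exists classically only where $g\neq0$, and the formula used above must be justified there and shown to produce no concentration on $\{g=0\}$. I would make this rigorous by regularizing, replacing $|g|$ with $(|g|^2+\varepsilon^2)^{1/2}$, carrying out the same computation (where all the negative powers are now harmless), checking that every bound above is uniform in $\varepsilon$, and letting $\varepsilon\to0$; because the right-hand side of the pointwise bound never involves negative powers of $|g|$, the uniform control survives the limit and yields the estimate for $|g|g\in H^2(\mathbb{R}^2)$.
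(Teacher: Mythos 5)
Your proposal is correct and follows essentially the same argument as the paper: the same Leibniz expansion of $\partial_x^2(|g|g)$ with the identical formula for $\partial_x^2|g|$, the bound $\bigl|\partial_x|g|\bigr|\le|\partial_x g|$, and the reduction to the Gagliardo--Nirenberg inequality $\|\partial_x g\|_{L^4(\mathbb{R}^2)}^2\le C\|g\|_{L^\infty(\mathbb{R}^2)}\|\partial_x^2 g\|_{L^2(\mathbb{R}^2)}$. The additional steps you supply --- controlling the mixed derivative via $(\xi^2+\eta^2)^2\le 2(\xi^4+\eta^4)$ so that the two pure second derivatives suffice, proving the anisotropic Gagliardo--Nirenberg bound by slicing in $y$, and regularizing with $(|g|^2+\varepsilon^2)^{1/2}$ to justify differentiating $|g|$ across the zero set --- are details the paper leaves implicit rather than a genuinely different route.
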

\end{appendix}


\begin{thebibliography}{}
\bibitem{ref1}  Y. Bahri, S. Ibrahim, H. Kikuchi, Remarks on solitary waves and Cauchy problem for a Half-wave-Schrödinger equation, Communications in Contemporary Mathematics 23(05), 2020.
\bibitem{ref18} Y. Bahri, S. Ibrahim, H. Kikuchi, Transverse Stability of Line Soliton and Characterization of Ground State for Wave Guide Schrödinger Equations. Journal of Dynamics and Differential Equations, pages 1–43, 2021.
\bibitem{ref2} H. Brezis, T. Gallouët, Nonlinear Schrödinger evolution equations, Nonlinear Anal., Theory Methods Appl. 4 (1980) 677–681.
\bibitem{ref10} N. Burq, P. Gérard, N. Tzvetkov, Bilinear eigenfunction estimates and the nonlinear Schrödinger equation on surfaces. Invent. Math. 159.1 (2005), pp. 187–223.
\bibitem{ref9} N. Camps, L. Gassot, S. Ibrahim, Refined probabilistic local well-posedness for a cubic Schrödinger half-wave equation. \href{https://arxiv.org/abs/2209.14116}{arXiv: 2209.14116, 2022}.
\bibitem{ref17} N. Camps, L. Gassot, S. Ibrahim, Cubic Schrödinger half-wave equation and random initial data, \href{https://www.math.ens.psl.eu/~gassot/proceeding-NLSHW.pdf}{Journées équations aux dérivées partielles, 2022}.
\bibitem{ref14} J.-Y. Chemin, C.-J. Xu, Inclusions de Sobolev en calcul de Weyl-Hörmander et champs de vecteurs sous-elliptiques. Annales scientifiques de l'École Normale Supérieure, Série 4, Tome 30 (1997) no. 6, pp. 719-751.
\bibitem{ref3} X. Chen, Existence of modified wave operators and infinite cascade result for a half wave Schrödinger equation on the plane. \href{https://arxiv.org/abs/2302.12067}{arXiv: 2302.12067, 2023}.
\bibitem{ref15} R. L. Frank, E. Lenzmann, Uniqueness of non-linear ground states for fractional Laplacians in R. Acta Math. 210 (2013), 261–318.
\bibitem{ref16} R. L. Frank, E. Lenzmann, L. Silvestre, Uniqueness of radial solutions for the fractional Laplacian. Comm. Pure Appl. Math. 69 (2016), 1671–1726.
\bibitem{ref4} P. Gérard, S. Grellier, The cubic Szeg\H o equation, Ann. Sci. Éc. Norm. Supér. (4), 43(5) :761–810, 2010.
\bibitem{ref11} I. Kato, Ill-posedness for the Half wave Schrödinger equation. \href{https://arxiv.org/abs/2112.10326}{arXiv: 2112.10326, 2021}.
\bibitem{ref12} N. Tzvetkov, N. Visciglia, Well-posedness and scattering for nonlinear Schrödinger equations on $\mathbb{R}^{d} \times \mathbb{T}$ in the energy space. Rev. Mat. Iberoam. 32 (2016), no. 4, 1163–1188.
\bibitem{ref5} H. Xu, Unbounded Sobolev trajectories and modified scattering for a wave guide non-linear Schrödinger equation. Mathematische Zeitschrift, 286:443–489, 2017.
\bibitem{ref13} V. I. Yudovich, Non-stationary flow of an ideal incompressible liquid, USSR Comput.
Math. Math. Phys. 3 (1963), 1407–1456 (english), Zh. Vuch. Mat. 3 (1963), 1032– 1066 (russian).
\end{thebibliography}
\end{document}